\documentclass[12pt]{article}
\usepackage{amssymb,amsmath,amsthm,epsfig}
\usepackage{latexsym, enumerate}
\usepackage{eepic}
\usepackage{epic}
\usepackage{graphicx}
\usepackage{color}
\usepackage{ifpdf}
\usepackage{subfigure}
\usepackage{tikz}
\usepackage{dsfont}
\usepackage{multirow}
\usepackage{makecell}
\usepackage{algorithm}
\usepackage{bm}
\usepackage{multirow}
\usepackage{color}
\usepackage[colorlinks, linkcolor=blue,anchorcolor=blue,citecolor=blue,urlcolor=blue]{hyperref}
\usepackage{appendix}
\usepackage{comment}
\usepackage{tikz}
\usetikzlibrary{decorations.pathmorphing}
\graphicspath{{figs/}}

\theoremstyle{plain}
\newtheorem{lem}{Lemma}[section]
\newtheorem{thm}[lem]{Theorem}

\theoremstyle{definition}
\newtheorem{defn}{Definition}[section]

\theoremstyle{remark}
\newtheorem{rem}{Remark}[section]

\newcommand{\p}{\partial}
\newcommand{\ds}{\displaystyle}

\newcommand{\ms}{\medskip}
\newcommand{\R}{ \mathbb{R}}

\def \e{\ensuremath{\mathrm{e}}}
\def \i{\ensuremath{\mathrm{i}}}
\def \d{\ensuremath{\mathrm{d}}}
\begin{document}

\title{ \large\bf Quantitative symmetry-breaking and nonlinear harmonic generation in plasmonics}
\author{
Hongyu Liu\thanks{Department of Mathematics, City University of Hong Kong, Kowloon, Hong Kong, China.\ \ Email: hongyu.liuip@gmail.com; hongyliu@cityu.edu.hk}
\and
Zhi-Qiang Miao\thanks
{School of Mathematics and Statistics, Central South University, Changsha 410083, China, and
Department of Mathematics, City University of Hong Kong, Kowloon, Hong Kong, China.
Email: zhiqmiao@csu.edu.cn; zhiqmiao@cityu.edu.hk}
\and
Jingfeng Yao\thanks{School of Physics, Harbin Institute of Technology, Harbin, China.\ \ Email: yaojf@hit.edu.cn}
\and
Chengxun Yuan\thanks{School of Physics, Harbin Institute of Technology, Harbin, China. \ \ Email: yuancx@hit.edu.cn}
\and 
Guang-Hui Zheng\thanks{School of Mathematics, Hunan University, Changsha 410082, China. Email: zhenggh2012@hnu.edu.cn; zhgh1980@163.com}
}
\date{}%
\maketitle
% ----------------------------------------------------------------
\begin{abstract}
We develop a quantitative mathematical theory that offers new perspectives on nonlinear harmonic generation in plasmonic structures arising from symmetry breaking. Focusing on second harmonic generation--the most fundamental process and the most extensively studied owing to its practical significance--we establish a theoretical framework that can be readily extended to higher-order harmonics. We investigate the plasmonic system in the static regime using a columnar nanowire with \(n\)-fold rotational symmetry (\(n \in \mathbb{N}\)) and construct a phenomenological model in which the second harmonic response originates from nonlinear sources confined to a selvedge region near the surface. By introducing a notion of symmetry degree grounded in group theory, we precisely quantify the second harmonic generation in terms of multipolar contributions. Our theory complements existing physical descriptions of this practically important phenomenon and provides a rigorous account of how nonlinear optical efficiency depends on shape, size, symmetry, and defects in plasmonic structures.
\end{abstract}

\smallskip
{\bf Keywords}: plasmonics; nonlinear optical responses; quantitative symmetry breaking; group theory; asymptotic analysis
% ----------------------------------------------------------------
\tableofcontents

\section{Introduction}
Nonlinear optics is a fundamental branch of modern photonics, with its origins dating back to the classic 1961 experiment by Franken \emph{et al.} \cite{Franken1961}, in which optical second harmonic generation was first observed by directing a ruby laser \cite{Maiman1960} into a quartz crystal. Unlike classical optics, which deals with linear interactions where light passes through a medium without changing its frequency, nonlinear optics \cite{Moloney2004, Boyd2008} explores phenomena where intense light fundamentally alters the optical properties of the material, causing its response to an incident field to depend nonlinearly on the field's strength. This field has not only greatly expanded the tunable range of laser frequencies but also provided essential technological support for optical communications, quantum optics, and precision measurement. However, nonlinear optical effects arising from the interaction between light and matter are typically very weak. Therefore, the enhancement of nonlinear signals has become a central pursuit in nonlinear optics. In traditional nonlinear optics, harmonic generation is typically enhanced through phase matching conditions \cite{Levine1975, Piltch1976}, increased incident light intensity \cite{Sarma2019}, or the use of natural materials with high nonlinear coefficients \cite{Barakat2013, Dutto2013}. However, these requirements are often stringent and difficult to achieve, which has historically limited the extent of harmonic generation enhancement.
In recent years, with the rapid development of surface plasmon resonance technology \cite{Ando2016, Deng2019, Deng2020, Deng2021, Deng2022, Blasten2020, Lei2025}, plasmon-enhanced nonlinear harmonic generation has emerged as a powerful strategy to overcome the inherent inefficiency of frequency conversion processes in the field of nanophotonics. The underlying principle is that when light irradiates a metal nanostructure, the conduction electrons collectively oscillate, giving rise to surface plasmon resonance and generating a strong localized electromagnetic field \cite{Danckwerts20017}, which in turn enhances the otherwise weak nonlinear signal \cite{Kim2011}. A key advantage of using surface plasmon resonance in metal nanostructures to enhance nonlinear optics is that the resonant structures are small in size, which allows the phase-matching conditions to be disregarded. This approach relies primarily on the unique electromagnetic resonance characteristics of surface plasmons to boost nonlinear effects \cite{Chen2012}.

Among all nonlinear optical phenomena, second harmonic generation has received extensive attention as the most fundamental process and for its significant applications in sensing and imaging \cite{Borcea2017, Assylbekov2021, Ren2024, Cakoni2025}. It is well known that second harmonic generation is forbidden within the electric dipole approximation in centrosymmetric bulk materials, such as the noble metals gold and silver \cite{Agarwal1981, Bloembergen1968, Dadap1999, Dadap2004}. Therefore, second harmonic generation is locally emitted from the surface where the centrosymmetry is broken \cite{Finazzi2007, Gennaro2016, Frizyuk2019}. The total second harmonic generation emission of the nanostructure results from the coherent addition of these local second harmonic generation emitters. If these structures also possess centrosymmetric shapes, the surface second harmonic generation response will again vanish in the electric dipole approximation because of the coherent addition. Consequently, the overall shape of the nanostructure plays a significant role in engineering an efficient second harmonic generation response. Numerous studies on second harmonic generation have focused on noble metal spherical nanoparticles, investigating the role of morphological deviations from a perfect spherical shape \cite{Hubert2007, Bachelier2008, Nappa2005a, Nappa2005b}. Additionally, it has also been noted \cite{Guyot1988} that there are nonvanishing electric quadrupolar and magnetic dipolar bulk contributions to the nonlinear polarization, which are related to the field gradient not included in the electric dipole approximation. Although these bulk contributions are typically small, for centrosymmetric bulk materials they can still be quite significant in comparison with the electric dipole contribution from the surface and therefore cannot be neglected. To comprehensively study and calculate second harmonic generation, several models beyond the dipole approximation have been proposed in the literature, such as the phenomenological model \cite{Sipe1987, Ponath1991}, the hydrodynamic model \cite{Corvi1986, Ciraci2012}, the jellium model \cite{Weber1987, Liebsch1989}, and the dipolium model \cite{Schaich1992, Mendoza1996}.

In this article, we develop a quantitative and rigorous mathematical theory that offers new perspectives on nonlinear harmonic generation in plasmonic structures arising from symmetry breaking. Although we primarily focus on second harmonic generation, we emphasize that our theoretical framework can be readily extended to higher-order harmonics. We investigate the plasmonic system in the static regime using a columnar nanowire with \(n\)-fold rotational symmetry (\(n \in \mathbb{N}\)), noting that a single metallic nanowire with threefold symmetry was previously studied in \cite{Singla2019}. To understand the nonlinear response of the nanowire, we construct a phenomenological model in which second harmonic generation originates from nonlinear sources confined to a thin selvedge region near the surface. This model naturally incorporates both the symmetry and the spatial extent of the nanowire, enabling the introduction of group theory to quantify the second harmonic generation in terms of symmetry degrees. We further develop an analytical approach to precisely characterize the second harmonic generation via multipolar contributions. Our theory complements existing physical descriptions of this practically important phenomenon and provides a rigorous account of how nonlinear optical efficiency depends on shape, size, symmetry, and defects in plasmonic structures.

The remainder of the paper is organized as follows. In Section \ref{sec:problem}, we present the mathematical formulation of the problem. Section \ref{sec:pre} provides auxiliary results from group theory and establishes the well-posedness of the governing equations. In Section \ref{sec:sensitivity-analysis}, we rigorously derive the asymptotic expansion of the perturbed linear and second harmonic electric fields. Section \ref{sec:proof} contains the main results for nonlinear harmonic generation via geometric symmetry perturbations in a uniform background field. In Section \ref{sec:proof6}, we present the main results concerning nonlinear harmonic generation via geometric and background field symmetry perturbations under a non-uniform background field. Finally, Section \ref{sec:conclusion} concludes the paper with a discussion of relevant implications and future directions.

\section{Mathematical setting of the problem}\label{sec:problem}
We begin by considering a nanowire made of a centrosymmetric material, illuminated by a monochromatic laser beam $\bm E_{\omega}^{\text{in}}$ with frequency $\omega$. The beam is polarized in the plane of the cross-section. The nanowire is modeled as an isolated, infinitely long cylinder placed in vacuum, with translational symmetry along its axis (the $\hat{\bm x}_3$ direction). We first consider, under a uniform background field, a cross-sectional geometry that lacks inversion symmetry, being a slight deformation from a centrosymmetric shape, as shown in Figure \ref{fig:schematic}(a). Moreover, we assume that the characteristic length scale $r_0$ of the cylinder (to which the deformation is applied) is much smaller than the wavelength of the incident electromagnetic field, which allows us to ignore the effects of retardation. Under this assumption, the monochromatic laser beam is reduced to an external electric field $\bm E_{\text{ext}}$ oscillating within the $x_1-x_2$ plane, as shown in Figure \ref{fig:schematic}(b). Therefore, we can address this problem within the nonretarded (quasi-static) regime, and then study the influence of the degree of symmetry of the material geometry on the multipole radiation of the second harmonic field. Secondly, in the case of a non-uniform background field, by perturbing both the background field and the boundary geometry, we induce a corresponding symmetry breaking and a change in the degree of symmetry, and investigate the profound connection between these factors and the dipole and multipole radiation behaviors of the second harmonic field.

\begin{figure}
  \centering
\begin{tikzpicture}
  \def\R{1}          
  \def\a{0.2}        
  \def\k{3}          
  \def\h{5}         
  \def\vy{0.5}     
  \draw[fill=gray!30, domain=0:360, samples=200, smooth, variable=\t]
    plot ({ (\R+\a*cos(\k*\t)) * cos(\t) }, { (\R+\a*cos(\k*\t)) * sin(\t) * \vy });
  \draw[fill=gray!30, domain=0:360, samples=200, smooth, variable=\t]
    plot ({ (\R+\a*cos(\k*\t)) * cos(\t) }, { (\R+\a*cos(\k*\t)) * sin(\t) * \vy + \h });
  \draw (-\R+\a,0) -- (-\R+\a,\h);
  \draw (\R+\a,0) -- (\R+\a,\h);
  \draw[-latex,thick] (0,0)   -- (1.5,0);
      \node at (1.8, 0) {$x_1$};
  \draw[-latex,thick] (0,0)   -- (0,1.2);
        \node at (0, 1.4) {$x_3$};
  \draw[-latex,thick,rotate=-120,] (0,0)   -- (1,0);
          \node at (-0.6, -1) {$x_2$};
\draw[-latex,thick] (-3,2.5)   -- (-1.5,2.5);
  \draw[thick] (-2.25,2.7) -- (-2.25,2.3);   
    \draw[thick] (-2.15,2.7) -- (-2.15,2.3);   
        \draw[thick] (-2.35,2.7) -- (-2.35,2.3);   
          \node at (-2.25,3) {$\bm E_{\omega}^{\text{in}}$};
          \draw[thick, domain=0:6.28/3, smooth, variable=\x,rotate=30,shift={(3,2)}] 
    plot ({\x}, {0.2*sin(8*\x r)});
              \node at (2.7,3.4) {$\bm E_{\omega}^{\text{sc}}$};
          \draw[thick, green, domain=0:6.28/3, smooth, variable=\x,rotate=-30,shift={(0.3,2.5)}] 
    plot ({\x}, {0.2*cos(8*\x r)});
                  \node at (2.7,2) {$\bm E_{2\omega}^{\text{sc}}$};
    \node at (0, -1.5) {(a)};
\end{tikzpicture}
\qquad \quad
\begin{tikzpicture}
  \draw[fill=gray!30, domain=0:360, samples=200, smooth] 
    plot ({\x}: {1 + 0.2*cos(3*\x)});  
      \node at (-0.3, -0.5) {$\Omega_\epsilon$};     
      \node at (0.6, 0.9) {$\p \Omega_\epsilon$};
      \node at (-0.4, 0.5) {$\bm E_{\omega}$};
      \node at (0.5, -0.3) {$\bm E_{2\omega}$};
  \draw[-latex,thick] (0,0)   -- (1.5,0);
      \node at (1.8, 0) {$x_1$};
  \draw[-latex,thick] (0,0)   -- (0,1.5);
        \node at (0, 1.7) {$x_2$};
\draw[-latex,thick] (-2.5,0)   -- (-1,0);
\node at (-1.75,0.3) {$\bm E_{\text{ext}}$};
          \draw[thick, domain=0:6.28/3, smooth, variable=\x,rotate=30,shift={(1.5,0)}] 
    plot ({\x}, {0.2*sin(8*\x r)});
              \node at (2.5,1) {$\bm E_{\omega}^{\text{sc}}$};
          \draw[thick, green, domain=0:6.28/3, smooth, variable=\x,rotate=-30,shift={(1.5,0)}] 
    plot ({\x}, {0.2*cos(8*\x r)});
                  \node at (2.5,-1) {$\bm E_{2\omega}^{\text{sc}}$};
                  \draw[-latex,thick,rotate=-120,shift={(1.2,0)}] (0,0)   -- (0.5,0);
                        \node at (-0.5, -1.4) {$\nu$};
  \node at (0, -3.5) {(b)};
\end{tikzpicture}
  \caption{Schematic illustration of second harmonic generation, showing the relevant physical and geometric parameters. (a) Three-dimensional model: scattering of a monochromatic time-harmonic incident wave by a nanowire with the shape of a slightly deformed cylinder. (b) The reduced two-dimensional problem.}
\label{fig:schematic}
\end{figure}

As the inversion symmetry of the material is locally lost in a thin selvedge region around the surface, a nonlinear polarization is induced at the surface of the particle, which we write as
\begin{align}\label{eq:surface-nonlinear-polarization}
P^s_{i} = \sum_{jk} \chi_{ijk}^s F_{j}F_{k},
\end{align}
where $\chi_{ijk}^s$ are the components of the local nonlinear surface susceptibility. The field $\bm F$ is defined using quantities that are continuous across the surface, thereby avoiding ambiguity regarding the position within the selvedge at which the fields are evaluated. Specifically, $\bm F$ is composed of the normal projection of the displacement field and the parallel projection of the electric field, both evaluated at the surface. Thus,
\begin{align}\label{eq:F}
 \bm F = \bm E_{\omega}|_{+} = \varepsilon_\omega \bm E_{\omega}^{\perp}|_{-} +  \bm E_{\omega}^{\parallel}|_{-},
\end{align}
where the linear electric field $\bm E_{\omega} $ can be expressed through an electrostatic potential $u_\omega$, i.e., $\bm E_{\omega} = -\nabla u_\omega $. The notations $\perp$ and $\parallel$ denote the projections normal and parallel to the surface, respectively. The $|_{-}$ and $|_{+}$ subscripts indicate that the nonlinear polarization sheet is located just above the metal, while the fundamental electric field is evaluated just below the interface.

We assume that the thickness of the selvedge region is much smaller than the radius of the cylinder, and thus that the surface can be considered as locally flat. We further assume local invariance under rotations around the surface normal. Unless the surface contains structural features with intrinsic chirality, the metal-background interface possesses an isotropic mirror-symmetry plane perpendicular to the surface. Under these circumstances, the surface nonlinear susceptibility $\chi_{ijk}^s$ has only three independent components, namely, $\chi_{\perp\perp\perp}^s$, $\chi^s_{\perp\parallel\parallel}$, and $\chi_{\parallel\parallel\perp}^s = \chi_{\parallel\perp\parallel}^s$, where the symbols $\perp$ and $\parallel$ refer to the directions normal and tangent to the surface, respectively.
The nonlinear polarization induced on the surface of the cylinder is obtained by using \eqref{eq:surface-nonlinear-polarization} and \eqref{eq:F}. Specifically, its perpendicular component is
\begin{align}\label{eq:Ps-pre}
P^s_{\perp} = \chi_{\perp\perp\perp} ^s F_{\perp}F_{\perp}
= \chi_{\perp\perp\perp} ^s \left(\varepsilon_\omega\frac{\p u_\omega}{\p \nu}\Big|_{-}\right)^2,
\end{align}
and the tangential component is
\begin{align}\label{eq:Ps-par}
P^s_{\parallel} = \chi_{\parallel\parallel\perp} ^s F_{\parallel}F_{\perp} + \chi_{\parallel\perp\parallel} ^s F_{\perp}F_{\parallel}
= 2 \chi_{\parallel\perp\parallel} ^s F_{\perp} F_{\parallel}
= 2 \chi^s_{\parallel\perp\parallel} \left(\varepsilon_\omega\frac{\p u_\omega}{\p \nu}\Big|_{-}\right) \left(\frac{\p u_\omega}{\p T}\Big|_{-}\right),
\end{align}
where $\nu$ and $T$ denote the outward unit normal and tangential vectors to the surface, respectively. For convenience, we denote $\chi^s_{\perp}=\chi^s_{\perp\perp\perp}$ and $\chi^s_{\parallel} =\chi^s_{\parallel\perp\parallel}$ in subsequent sections.

The variation of the tangential component of the nonlinear surface polarization along the surface yields another contribution to the surface charge $\sigma^s$ beyond that due to the
termination of the bulk nonlinear polarization $\sigma^b$, where we use the superscript $s$ to denote its surface origin. It is given by
\begin{align}\label{eq:sigma-s}
  \sigma^s =& - \nabla_{\parallel} \cdot \bm P^s_{\parallel},
\end{align}
where $\nabla_{\parallel}$ is the gradient operator projected along the surface, and $\bm P_{\parallel}^s = P^s_{\parallel} T$ is the projection of $\bm P^s$ along the surface. 

To mathematically state the problem, let $\Omega$ be a bounded domain in $ \mathbb{R}^2$. Throughout this paper, we assume that $\Omega$ is of class $C^2$.
Let $H(x)$ be the harmonic function in $\mathbb{R}^2$, which denotes the background electrostatic potential. The governing equations are formulated as follows:
\begin{align}\label{eq:governing-equation}
\begin{cases}
\ds \Delta u_\omega = 0  & \mbox{in } \mathbb{R}^2, \ms \\
\ds u_\omega|_{+} = u_\omega|_{-} & \mbox{on }  \partial \Omega,\ms\\
\ds \frac{\partial u_\omega}{\partial \nu}\Big|_{+}=\varepsilon_\omega\frac{\partial u_\omega}{\partial \nu}\Big|_{-}   & \mbox{on } \partial \Omega, \ms \\
\ds  u_\omega = H(x) + O\left(|x|^{-1} \right) & \mbox{as } |x|\rightarrow + \infty,\ms\\
\ds \Delta u_{2\omega}= 0 & \mbox{in }  \mathbb{R}^2, \ms\\
\ds u_{2\omega}|_{+} - u_{2\omega}|_{-} = 4\pi P^s_{\perp} & \mbox{on }  \partial \Omega,\ms\\
\ds \frac{\partial u_{2\omega}}{\partial \nu} \Big|_{+} - \varepsilon_{2\omega}\frac{\partial u_{2\omega}}{\partial \nu} \Big|_{-} = -4\pi\sigma^s & \mbox{on } \partial \Omega,\ms \\
\ds  u_{2\omega} = o(1) & \mbox{as } |x|\rightarrow +\infty,
\end{cases}
\end{align}
where $\frac{\partial}{\partial \nu}$ denotes the outward normal derivative, and we use the notation $\frac{\partial u_{\omega}}{\partial \nu}\big|_{\pm}$ defined by
$$
\frac{\partial u_{\omega}}{\partial \nu}\Big|_{\pm}(x):=\lim_{t\rightarrow 0^+}\langle \nabla u_\omega(x\pm t\nu(x)),\nu(x) \rangle, \ \ x\in \p \Omega,
$$
where $\nu$ is the outward unit normal vector to $\p \Omega$. The same notation is used for $\frac{\partial u_{2\omega}}{\partial \nu}\big|_{\pm}$.

Note that the decay condition of $u_{2\omega}$ is $o(1)$, which admits two possible asymptotic behaviors, $O\left(|x|^{-1}\right)$ and $O \left(|x|^{-2}\right)$, depending on the symmetry of the geometry $\Omega$. For a geometry with inversion symmetry, the decay condition of $u_{2\omega}$ must be $O \left(|x|^{-2}\right)$, meaning the scattered field is radiated in the form of a quadrupole. However, for a geometry without inversion symmetry, both decay modes are possible. In this paper, we primarily explore what symmetry the geometry and background field must possess for multipole radiation to occur, particularly for the generation of dipole radiation. Therefore, we introduce the following definitions of dipole radiation and multipole radiation. The definitions of symmetry for the geometry of the material boundary and the background field will be introduced in Sections \ref{sec:proof} and \ref{sec:proof6}.

%In the case of a uniform background field, i.e., $H(x) = - E x_1$ ($E>0$ is a fixed constant), 
%We are now in a position to introduce the definitions of dipole radiation, multipole radiation and symmetry breaking, which play a central role in this paper.
\begin{defn}
A solution $u_{2\omega}$ to the Laplace equation whose domain of definition
contains the exterior of some circle is called dipole radiation if it satisfies a finiteness condition
\begin{align*}
  u_{2\omega} = O\left(\frac{1}{|x|}\right) \quad \mbox{as } |x|\rightarrow \infty,
\end{align*}
uniformly for all directions. More generally, we refer to the second-harmonic field as a 
$2^m$-multipole radiation field if it satisfies the following decay condition:
\begin{align*}
  u_{2\omega} = O\left(\frac{1}{|x|^m}\right) \quad \mbox{as } |x|\rightarrow \infty.
\end{align*}
\end{defn}

\section{Preliminaries and auxiliary results}\label{sec:pre}
In this section, we first collect some preliminary knowledge on group theory and then establish the representation formula of the solution of the governing equations.
\subsection{Group theory and symmetry degree}\label{sec:group-theory}
In mathematics, a dihedral group is the group of symmetries of a regular polygon, encompassing both its rotations and reflections. As one of the simplest structures in finite group theory, dihedral groups serve as fundamental examples and play a crucial role in both group theory and geometry.

Let \(n\) be a positive integer. The dihedral group \(D_n\) is defined by
\[
D_n = \{ e, R, R^2, \dots, R^{n-1}, S, SR, SR^2, \dots, SR^{n-1} \},
\]
where \(e\) is the identity, \(R\) represents a counterclockwise rotation of \(2\pi/n\) about the geometric center, and \(S\) denotes a reflection about a line of symmetry. It can be generated by the symbols \(R\) and \(S\) satisfying the following relations:
\[
R^n = S^2 = e, \quad SRS = R^{-1}.
\]
The order of \(D_n\), denoted by \(|D_n|\), is the number of its elements. Hence, \(|D_n| = 2n\).

We define the center of \(D_n\), denoted by \(Z(D_n)\), as the set of elements that commute with all elements of \(D_n\):
\[
Z(D_n) = \{ z \in D_n \mid  az = za, \ \forall a \in D_n\}.
\]
From this definition, one finds that \(Z(D_1) = \{e, S\}\), \(Z(D_2) = \{e, R, S, RS\}\), and for \(n \geq 3\), \(Z(D_n) = \{ e \}\) if \(n\) is odd, while \(Z(D_n) = \{ e, R^{n/2} \}\) if \(n\) is even.
Note that \(D_n\) is abelian if and only if its center is the whole group, i.e., \(Z(D_n) = D_n\), which implies that rotations and reflections commute. Hence, we have the following lemma.

\begin{lem}
Dihedral groups \(D_n\) are non-abelian for integers \(n \geq 3\). In particular, for \(n = 1, 2\), \(D_n\) is abelian.
\end{lem}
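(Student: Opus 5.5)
To prove the lemma, I will work directly from the presentation $R^n = S^2 = e$, $SRS = R^{-1}$ and the listing $D_n = \{e, R, \dots, R^{n-1}, S, SR, \dots, SR^{n-1}\}$ with $|D_n| = 2n$. Distinctness of these $2n$ elements is part of the definition. In particular $R$ has order exactly $n$, so $R^k = e$ if and only if $n \mid k$.

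\textbf{Non-abelian case ($n \ge 3$).} I will exhibit one non-commuting pair, namely $R$ and $S$. From $SRS = R^{-1}$ and $S^2 = e$ we get $SR = R^{-1}S$. Suppose $RS = SR$. Then $RS = R^{-1}S$, and cancelling $S$ on the right gives $R^2 = e$. Hence $n \mid 2$, i.e.\ $n \in \{1,2\}$, which contradicts $n \ge 3$. So $RS \neq SR$ and $D_n$ is non-abelian. Equivalently, $R \notin Z(D_n)$, so $Z(D_n) \neq D_n$, consistent with the centre computation quoted before the lemma.

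\textbf{Abelian case ($n = 1, 2$).} Here $R^2 = e$, so $R^{-1} = R$ and the relation $SRS = R^{-1}$ becomes $SR = RS$. Since $D_n$ is generated by $R$ and $S$, and these generators commute, every pair of words in $R, S$ commutes. Hence $D_n$ is abelian. Concretely:
\begin{itemize}
\item for $n = 1$, $D_1 = \{e, S\}$, which is cyclic of order $2$;
\item for $n = 2$, $D_2 = \{e, R, S, SR\}$, in which every non-identity element has order $2$ (this is the Klein four-group).
\end{itemize}
This also matches the stated values $Z(D_1) = D_1$ and $Z(D_2) = D_2$.

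\textbf{Main obstacle.} The only delicate point is justifying that $R^2 \neq e$ when $n \ge 3$. This is where the order of $R$ being exactly $n$ is used, and it follows from the elements $e, R, R^2$ being distinct in the listing of $D_n$. Geometrically it is also clear: a rotation by $4\pi/n$ is not the identity when $n \ge 3$. Everything else is routine manipulation of the defining relations.
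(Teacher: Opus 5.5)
Your proof is correct, but it takes a different route from the paper. The paper never manipulates the relations directly. It records the centers $Z(D_1)=\{e,S\}$, $Z(D_2)=\{e,R,S,RS\}$, and, for $n\ge 3$, $Z(D_n)=\{e\}$ ($n$ odd) or $Z(D_n)=\{e,R^{n/2}\}$ ($n$ even). It then reads off the lemma from the criterion that $D_n$ is abelian if and only if $Z(D_n)=D_n$: for $n\ge 3$ one has $|Z(D_n)|\le 2<2n$, so the group is non-abelian, while for $n=1,2$ the center is the whole group.

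You instead exhibit a single non-commuting pair, $R$ and $S$, using $SR=R^{-1}S$ together with $R^2\neq e$. For $n=1,2$ you note that the generators commute because $R=R^{-1}$.

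Your argument is shorter and self-contained. The paper states its center values without deriving them, and deriving them needs your computation plus more. Excluding $R$ from the center is exactly your step. Excluding the reflections uses the same fact: $R\cdot SR^k=SR^{k-1}$ while $SR^k\cdot R=SR^{k+1}$, and these agree only if $R^2=e$. The paper's route gives the full center, which is more information than the lemma needs; yours proves exactly the statement with minimal machinery.
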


Since the dihedral group \(D_n\) is the group of symmetries for a regular polygon with $n$ sides, it is common to restrict to $n\geq3$.  We next introduce the following definition and theory to characterize geometric invariance under the dihedral group action.
\begin{defn}\label{def:grouinvar}
A two-dimensional closed curve \(\Gamma\) (or the closed region it encloses) is said to be invariant under the dihedral group \(D_n\) if for every \(d \in D_n\) and every \(x \in \Gamma\), we have \(dx \in \Gamma\), i.e., \(D_n(\Gamma) = \Gamma\).
\end{defn}

\begin{thm}\label{thm:dihgrouinvar}
Let the boundary $\partial \Omega$ be defined by (\ref{eq:geometry}) with $f =r_0 \cos(n\theta)$, and \(D_q\) is $2q$ order dihedral groups. If $q|n$, then $\partial \Omega$ is invariant under \(D_q\). Furthermore, the symmetry degree of $\partial \Omega$ is $2n$.
\end{thm}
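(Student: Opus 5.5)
The boundary in (\ref{eq:geometry}) is presumably given in polar coordinates as $r = r_0 + \epsilon f(\theta)$ (or $r=1+\epsilon f$); with $f = r_0\cos(n\theta)$ this reads $\partial\Omega=\{(r(\theta)\cos\theta, r(\theta)\sin\theta): \theta\in[0,2\pi)\}$ with $r(\theta)=r_0+\epsilon r_0\cos(n\theta)$. The whole argument reduces to tracking how $r(\theta)$ transforms under each group element. Fix the generators of $D_q$: $R$ is rotation by $2\pi/q$, so $R$ sends the point with polar data $(r,\theta)$ to $(r,\theta+2\pi/q)$. For $S$ take the reflection about the $x_1$-axis, which sends $(r,\theta)$ to $(r,-\theta)$.

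The first step checks invariance under the generators. This suffices because $D_q$ is generated by $R$ and $S$, and invariance $d(\Gamma)=\Gamma$ is preserved under composition. For $R$: if $x\in\partial\Omega$ has angle $\theta$ and radius $r(\theta)$, then $Rx$ has angle $\theta+2\pi/q$ and radius $r(\theta)$. Writing $n=kq$, we get $\cos(n(\theta+2\pi/q))=\cos(n\theta+2\pi k)=\cos(n\theta)$, so $r(\theta+2\pi/q)=r(\theta)$ and $Rx\in\partial\Omega$. For $S$: evenness of cosine gives $r(-\theta)=r(\theta)$, so $Sx\in\partial\Omega$. Hence $D_q(\partial\Omega)\subseteq\partial\Omega$. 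Equality follows because each $d\in D_q$ is a bijection of $\mathbb R^2$ whose inverse also lies in $D_q$.

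The second step establishes the symmetry degree. I read this as the order of the full symmetry group of $\partial\Omega$ within the dihedral/orthogonal group, i.e. the largest $|D_q|$ with $D_q(\partial\Omega)=\partial\Omega$. Taking $q=n$ in the first step shows that $D_n$ leaves $\partial\Omega$ invariant, giving degree at least $2n$. For the upper bound, I will show that any orthogonal map $g$ preserving $\partial\Omega$ lies in $D_n$ (with this choice of axes).

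The upper bound is the main obstacle, because it needs a rigidity statement and the paper's notion of degree must be pinned down. For $\epsilon$ small, $\partial\Omega$ is star-shaped with respect to the origin, and its radial function $r(\theta)$ determines it. A rotation by $\alpha$ preserves $\partial\Omega$ iff $r(\theta+\alpha)=r(\theta)$ for all $\theta$, i.e. $\cos(n\theta+n\alpha)=\cos(n\theta)$ for all $\theta$. Comparing Fourier coefficients gives $n\alpha\in2\pi\mathbb Z$, so only the $n$ rotations by multiples of $2\pi/n$ occur. Similarly, a reflection about the line at angle $\beta$ preserves $\partial\Omega$ iff $r(2\beta-\theta)=r(\theta)$, which forces $\cos(n(2\beta-\theta))=\cos(n\theta)$ and hence $2n\beta\in2\pi\mathbb Z$. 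This gives exactly $n$ reflections. The symmetry group is therefore exactly $D_n$, of order $2n$.

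The case $n$ odd needs a side remark: one must exclude symmetries about a point other than the origin. This follows because any isometry preserving a bounded set fixes its centroid, and the centroid here is the origin since $\int_0^{2\pi}\cos(n\theta)\,e^{\pm\i\theta}\,\d\theta$-type terms vanish to the relevant order for $n\ge 2$. I will state this briefly rather than grind through it.
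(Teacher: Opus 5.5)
Your argument is correct, and for the invariance part it is the paper's proof. You write $n=kq$, note that rotation by $2\pi/q$ shifts $n\theta$ by $2\pi k$, use evenness of $\cos$ for the reflection $\theta\mapsto-\theta$, and pass from the generators to all of $D_q$. The paper justifies that last step by calling $D_q$ ``cyclic'', which is a slip. Your appeal to generation by $R$ and $S$, plus $d^{-1}\in D_q$ to upgrade $D_q(\partial\Omega)\subseteq\partial\Omega$ to equality, is the right justification.

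The real difference is in the symmetry degree. The paper simply asserts that it is ``clearly'' $2n$ from the definition, which only supports the lower bound (take $q=n$). Your Fourier comparison of the radial function gives $n\alpha\in2\pi\mathbb{Z}$ for rotations and $2n\beta\in2\pi\mathbb{Z}$ for reflection axes. It is valid because $0<\epsilon<1$ makes $r(\theta)>0$ a faithful description of the curve, and $\epsilon\neq0$ lets you cancel $\epsilon r_0$. This actually proves that the symmetry group about the origin is exactly $D_n$, supplying the upper bound the paper omits. Your centroid remark is also the right way to handle the ``there exists a point'' clause of Definition~\ref{defn:symmetry-degree}.

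Tighten that remark in two ways. First, the centroid must be the origin exactly, not ``to the relevant order'', since any offset would relocate the center of symmetry. The cleanest proof is that the rotation by $2\pi/n$ about the origin is already a symmetry, hence fixes the centroid, while for $n\ge2$ its only fixed point is the origin. Second, the remark is needed for every $n\ge2$, not just odd $n$.
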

\begin{proof}
Since $q|n$, there exists $k\in\mathbb{Z_+}$ so that $n=kq$. Set $z=r_0(1+\epsilon\cos(n\theta))e^{i\theta}\in\partial\Omega$, we see 
\begin{align*}
Rz&= \e^{i\frac{2\pi}{q}}r_0(1+\epsilon\cos(n\theta))\e^{i\theta}\\
&=r_0\left(1+\epsilon\cos\left(n\left(\theta+\frac{2k\pi}{n}\right)\right)\right)\e^{i\left(\theta+\frac{2k\pi}{n}\right)}.
\end{align*}
Hence, $Rz\in\partial\Omega$, i.e., $\partial\Omega$ is invariant under rotational transformation. Furthermore, notice $\cos(n(-\theta))=\cos(n\theta)$, it deduce $Sz\in\partial\Omega$. Since dihedral groups $D_q$ is a cyclic group, in light of definition \ref{def:grouinvar}, we imply $\partial \Omega$ is invariant under \(D_q\). Clearly, From the definition of symmetry degree (definition \ref{defn:symmetry-degree}), we find the symmetry degree of $\partial \Omega$ is $2n$.
\end{proof}

\begin{rem}
From Theorem \ref{thm:dihgrouinvar}, if we restrict attention to dihedral groups with $n\geq3$, it is clear that, at $n\geq3$ the boundary geometry attains the lowest degree of symmetry. Conversely, the larger the value of \(n\), the smaller the rotation angle \(2\pi/n\) becomes, meaning that only a small rotation is required to map the shape onto itself—i.e., the shape is more symmetric. Consequently, the order \(2n\) of the dihedral group serves as an indicator of the degree of symmetry in the perturbed geometry: the larger \(n\) is, the stronger the geometric symmetry.
\end{rem}

To investigate the effect of the background field’s symmetry on nonlinear optical responses, we introduce the following definition.

\begin{defn}\label{defn:q-order}
We say that a bivariate real harmonic polynomials \( H(x) \) has \( q \)-th order symmetry if \( H(x) \) is invariant under the \( q \)-th order binary group \( D_q \), i.e., for all \( x \in \mathbb{R}^2 \),
\[
H(gx) = H(x)
\]
where \( g \in D_q \). The larger \( q \) is, the higher or stronger the symmetry of \( H(x) \) is considered to be.
\end{defn}

The following theorem is useful in the analysis in Section \ref{sec:Sb-non-uniform}.
\begin{thm}\label{thm:q-order}
A real harmonic polynomial $H$ of degree $\ell$ in two variables has \( q \)-th order symmetry if and only if $q \mid \ell$.
\end{thm}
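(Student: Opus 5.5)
The plan is to pass to polar coordinates and use the Fourier description of two-dimensional harmonic polynomials. I read ``degree $\ell$'' as homogeneous of degree $\ell\ge 1$: $H$ is the real part of $c\,z^\ell$ with $c\neq 0$. This is the situation in which the statement is sharp, and it is the case used in Section~\ref{sec:Sb-non-uniform}. Writing $c=A\e^{-\i\ell\theta_0}$ with $A>0$, we get
\[
H(r\cos\theta,r\sin\theta)=A\,r^\ell\cos\bigl(\ell(\theta-\theta_0)\bigr).
\]
As in the proof of Theorem~\ref{thm:dihgrouinvar}, I realize $D_q$ with $R$ the rotation by $2\pi/q$ and $S$ the reflection about the line $\theta=\theta_0$. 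The theorem implicitly fixes this axis; for an arbitrary axis the reflection condition can fail.

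\emph{Sufficiency.} Suppose $q\mid\ell$. Then
\[
H(Rx)=A r^\ell\cos\bigl(\ell(\theta-\theta_0)+2\pi\ell/q\bigr)=H(x),
\]
since $2\pi\ell/q\in 2\pi\mathbb{Z}$. The reflection acts as $\theta-\theta_0\mapsto-(\theta-\theta_0)$, and cosine is even, so $H(Sx)=H(x)$. Every element of $D_q$ is a word in $R$ and $S$, so $H(gx)=H(x)$ for all $g\in D_q$, which is the condition in Definition~\ref{defn:q-order}.

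\emph{Necessity.} Suppose $H(Rx)=H(x)$ for all $x$. Fix $r=1$ and consider the two trigonometric polynomials
\[
\theta\mapsto\cos\bigl(\ell(\theta-\theta_0)+2\pi\ell/q\bigr)\quad\text{and}\quad\theta\mapsto\cos\bigl(\ell(\theta-\theta_0)\bigr).
\]
They agree identically, so their $\e^{\i\ell\theta}$ Fourier coefficients agree, and this coefficient is nonzero because $A\neq0$ and $\ell\ge1$. Hence $\e^{2\pi\i\ell/q}=1$, that is, $q\mid\ell$.

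The main obstacle is interpretive rather than computational. If $H$ is not homogeneous, write $H=\sum_{k\le\ell}r^k(a_k\cos k\theta+b_k\sin k\theta)$. Invariance under $D_q$ then forces $q\mid k$ for \emph{every} $k$ with $(a_k,b_k)\ne(0,0)$, not merely for $k=\ell$, because the decomposition into homogeneous pieces is unique and $R$, $S$ preserve each degree. Moreover, the reflection condition forces all the nonzero modes to share a common axis. I would therefore state the result for homogeneous $H$ with the reflection axis aligned to $\theta_0$, and add a remark that the general case reduces to this one term by term.
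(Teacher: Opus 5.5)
Your proof is correct and follows essentially the paper's argument: sufficiency by substituting the rotation $\theta\mapsto\theta+2\pi/q$ and the reflection directly, and necessity by matching the coefficient of $\e^{\i\ell\theta}$ (equivalently, the paper's linear independence of $z^\ell$ and $\bar z^\ell$) to force $\e^{2\pi\i\ell/q}=1$. The only difference is that you allow a general phase $\theta_0$ and align the reflection axis with it. The paper instead tacitly takes $c$ real and $S$ to be $z\mapsto\bar z$, which is precisely the normalization its sufficiency step needs.
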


\begin{proof}
We first prove the sufficiency. If $q \mid \ell$, then there exists an integer $L$ such that $\ell = qL$. For a rotation $R \in D_q$, we have
\[
H(Rx) = c \cdot \operatorname{Re}\left((Rz)^\ell\right) = c \cdot \operatorname{Re}\left(\left(\e^{\i\frac{2\pi}{q}}z\right)^\ell\right)
= c \cdot \operatorname{Re}\left(\e^{\i L (2\pi)} z^\ell\right)  = c \cdot \operatorname{Re}(z^\ell) = H(x),
\]
so the degree-$\ell$ real harmonic polynomial is invariant under rotation. For a reflection $S$, we have
\[
H(Sx) = c \cdot \operatorname{Re}(\bar{z}^\ell) = c \cdot \operatorname{Re}(z^\ell) = H(x),
\]
i.e., it is also invariant under reflection. Hence sufficiency holds.

Conversely, to prove necessity, assume $H$ has \( \ell \)-th order symmetry.  Since
\[
H(x) = c \cdot \operatorname{Re}(z^\ell) = c \cdot \frac{z^\ell + \overline{z}^\ell}{2},
\]
by Definition \ref{defn:q-order} of \(\ell\)-th order symmetry, for a rotation $R \in D_q$ we must have $H(Rx) = H(x)$, i.e.,
\[
\frac{c}{2}\left(\e^{\i \ell \frac{2\pi}{q}}z^\ell + \e^{-\i \ell\frac{2\pi}{q}}\overline{z}^\ell\right) = \frac{c}{2}\left(z^\ell + \overline{z}^\ell \right).
\]
Thus
\[
\e^{\i \ell \frac{2\pi}{q}}z^\ell + \e^{-\i \ell\frac{2\pi}{q}}\overline{z}^\ell = z^\ell + \overline{z}^\ell.
\]
Noting that $z^\ell$ and $\overline{z}^\ell$ are linearly independent, we obtain
\[
\e^{\i\frac{\ell}{q}2\pi} = \e^{-\i\frac{\ell}{q}2\pi} = 1,
\]
which implies $q \mid \ell$.
\end{proof}

\subsection{Layer potentials formulation and well-posedness}
%\label{sec-layer-potentials}
For the domain $\Omega$, let us now introduce the single-layer potential by
\begin{align*}
\mathcal{S}_\Omega[\vartheta](x) :=\int_{\partial \Omega}G(x-y)\vartheta(y)\d s(y), \quad  x\in \mathbb{R}^2,
\end{align*}
and
\begin{align*}
\mathcal{D}_\Omega[\vartheta](x) :=\int_{\partial \Omega} \frac{\partial G(x-y)}{\partial \nu(y)}\vartheta(y)\d s(y), \quad  x\in \mathbb{R}^2\setminus \partial \Omega,
\end{align*}
where $\vartheta\in L^2(\p \Omega)$ is the density function, and the Green function $G(x-y)$ for the Laplacian in $\mathbb{R}^2$ is given by
\begin{align*}
G(x-y)=\frac{1}{2\pi}\ln|x-y|.
\end{align*}

Then the following jump relations hold:
\begin{align}
\label{jump-relation-S}
\frac{\partial\mathcal{S}_\Omega[\vartheta]}{\partial \nu}\bigg|_{\pm}(x)&=\left(\pm\frac{1}{2} \mathcal{I} +\mathcal{K}^*_\Omega \right)[\vartheta](x), \ \ x\in \partial \Omega, \ms\\
\mathcal{D}_\Omega[\vartheta]|_{\pm}(x)&=\left(\mp\frac{1}{2} \mathcal{I} + \mathcal{K}_\Omega \right)[\vartheta](x), \ \ x\in \partial \Omega, \label{jump-relation-D}
\end{align}
where $\mathcal{K}_{\Omega}$ is the boundary integral operator defined by
\begin{align*}
\mathcal{K}_\Omega[\vartheta](x)=\int_{\partial \Omega}\frac{\partial G(x-y)}{\partial \nu (y)}\vartheta(y)\d s(y),
\end{align*}
and $\mathcal{K}^*_{\Omega}$ is the $L^2$-adjoint of $\mathcal{K}_{\Omega}$, i.e.,
\begin{align*}
\mathcal{K}_\Omega^*[\vartheta](x)=\int_{\partial \Omega}\frac{\partial G(x-y)}{\partial \nu (x)}\vartheta(y)\d s(y).
\end{align*}

In order to derive the representation formulas for the solutions to the governing equation \eqref{eq:governing-equation}, we make use of the following lemma \cite{Ammari2007}.

\begin{lem}\label{I+K-invertible}
The operator $\lambda \mathcal{I} -\mathcal{K}^*_{\Omega}: L_0^2(\partial \Omega)\rightarrow L_0^2(\partial \Omega)$ is invertible. Here $L_0^2 := \{u\in L^2(\partial \Omega)\mid \int_{\partial \Omega} u \d s=0\}$.
\end{lem}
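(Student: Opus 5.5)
The statement omits the range of $\lambda$. I will prove it for $\lambda\in\mathbb{C}$ with $|\lambda|\ge \frac12$, which I take to be the intended meaning, as in \cite{Ammari2007}. The plan has three steps: check that $\mathcal{K}^*_\Omega$ maps $L^2_0(\partial\Omega)$ into itself, use compactness to reduce invertibility to injectivity, and obtain injectivity from an energy identity for the single-layer potential.

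\emph{Invariance and reduction.} The Gauss identity $\mathcal{K}_\Omega[1]=\frac12$ on $\partial\Omega$ gives
\begin{align*}
\int_{\partial\Omega}\mathcal{K}^*_\Omega[\vartheta]\,\d s=\int_{\partial\Omega}\vartheta\,\mathcal{K}_\Omega[1]\,\d s=\frac12\int_{\partial\Omega}\vartheta\,\d s .
\end{align*}
Hence $\lambda\mathcal{I}-\mathcal{K}^*_\Omega$ maps $L^2_0(\partial\Omega)$ into itself. Since $\Omega$ is of class $C^2$, the kernel $\frac{\langle x-y,\nu(x)\rangle}{2\pi|x-y|^2}$ is bounded and continuous on $\partial\Omega\times\partial\Omega$. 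So $\mathcal{K}^*_\Omega$ is compact on $L^2(\partial\Omega)$, and also on the closed invariant subspace $L^2_0(\partial\Omega)$. For $\lambda\neq0$ the Fredholm alternative then says that invertibility on $L^2_0$ is equivalent to injectivity on $L^2_0$.

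\emph{Injectivity.} Suppose $\vartheta\in L^2_0(\partial\Omega)$ satisfies $\mathcal{K}^*_\Omega[\vartheta]=\lambda\vartheta$, and set $u=\mathcal{S}_\Omega[\vartheta]$. Because $\int_{\partial\Omega}\vartheta\,\d s=0$, the logarithmic term cancels and $u=O(|x|^{-1})$, $\nabla u=O(|x|^{-2})$ as $|x|\to\infty$. By \eqref{jump-relation-S},
\begin{align*}
\frac{\partial u}{\partial\nu}\Big|_\pm=\Big(\lambda\pm\frac12\Big)\vartheta .
\end{align*}
Let $A=\int_\Omega|\nabla u|^2\,\d x$ and $B=\int_{\mathbb{R}^2\setminus\overline\Omega}|\nabla u|^2\,\d x$. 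The decay at infinity lets us integrate by parts in both regions, giving
\begin{align*}
A=\Big(\lambda-\frac12\Big)\int_{\partial\Omega}\bar u\,\vartheta\,\d s,\qquad B=-\Big(\lambda+\frac12\Big)\int_{\partial\Omega}\bar u\,\vartheta\,\d s .
\end{align*}

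If $\int_{\partial\Omega}\bar u\vartheta\,\d s=0$, then $A=B=0$, so $u$ is constant on $\mathbb{R}^2$. Since $u$ is continuous across $\partial\Omega$ and decays at infinity, $u\equiv0$, and then $\vartheta=\frac{\partial u}{\partial\nu}|_+-\frac{\partial u}{\partial\nu}|_-=0$.

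Otherwise, eliminating the common factor gives $\lambda(A+B)=\frac12(B-A)$ with $A,B\ge0$ and $A+B>0$. Thus $\lambda$ is real and $|\lambda|\le\frac12$, which for $|\lambda|>\frac12$ is already a contradiction.

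\emph{The borderline $|\lambda|=\frac12$.} This is the step I expect to be the main obstacle, because equality forces $A=0$ or $B=0$ and these need separate treatment.
\begin{itemize}
\item If $B=0$, then $u$ is constant outside $\Omega$ and tends to $0$, so $u=0$ on $\partial\Omega$. Uniqueness for the interior Dirichlet problem gives $u\equiv0$ in $\Omega$, hence $\vartheta=0$.
\item If $A=0$, then $u\equiv c$ in $\Omega$, and $u$ is a harmonic function in the exterior that is bounded, equals $c$ on $\partial\Omega$, and tends to $0$ at infinity. In two dimensions a bounded exterior harmonic function with constant boundary value $c$ must be identically $c$. One sees this by subtracting $c$ and applying the maximum principle together with the removability of an isolated singularity after a Kelvin transform. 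So $c=0$, $u\equiv0$, and $\vartheta=0$.
\end{itemize}
The zero-mean condition on $\vartheta$ is used exactly at the decay of $u$ and in this last step. With injectivity established in every case $|\lambda|\ge\frac12$, the Fredholm alternative gives invertibility on $L^2_0(\partial\Omega)$.
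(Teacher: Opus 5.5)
The paper gives no proof of this lemma. It quotes the result from \cite{Ammari2007}, where it is stated for $|\lambda|\ge\frac12$, so your reading of the missing range is the intended one.

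Your injectivity step is the standard energy argument: $A=(\lambda-\frac12)X$ and $B=-(\lambda+\frac12)X$, with $X=\int_{\partial\Omega}\bar u\,\vartheta\,\d s$. The real difference from the reference is how surjectivity is obtained. There $\Omega$ is only Lipschitz, $\mathcal{K}^*_\Omega$ need not be compact, and closed range requires Rellich-identity estimates (Verchota, Escauriaza--Fabes--Verchota). You instead use the paper's $C^2$ hypothesis to make the kernel bounded, hence $\mathcal{K}^*_\Omega$ compact, and finish with the Fredholm alternative. Your route is more elementary and also handles complex $\lambda$, which matters when $\varepsilon_\omega,\varepsilon_{2\omega}$ are complex. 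The reference's route covers rough boundaries.

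\emph{Hidden hypothesis.} You use one assumption silently, and it must be stated. In the case $B=0$, i.e.\ $\lambda=-\frac12$, you conclude that $u$ is constant outside $\Omega$. That requires $\mathbb{R}^2\setminus\overline{\Omega}$ to be connected. This is not merely a weakness of the proof: without it the lemma is false at $\lambda=-\frac12$.

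Take the annulus $\Omega=\{1<|x|<2\}$. Put $U=1$ for $|x|\le1$, $U=\ln(2/|x|)/\ln2$ for $1\le|x|\le2$, and $U=0$ for $|x|\ge2$. Let $\vartheta=\frac{\partial U}{\partial\nu}\big|_+-\frac{\partial U}{\partial\nu}\big|_-$. Then $\vartheta\neq0$, $\int_{\partial\Omega}\vartheta\,\d s=0$, and $U=\mathcal{S}_\Omega[\vartheta]$. Since $\frac{\partial U}{\partial\nu}\big|_+=0$, the jump relation gives $\mathcal{K}^*_\Omega[\vartheta]=-\frac12\vartheta$.

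So you should add the assumption that $\Omega$ is simply connected. This holds for the perturbed disks used in the paper.

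\emph{Minor simplification.} Both borderline cases close faster than you make them.
\begin{itemize}
\item If $A=0$, then $u\equiv c$ on $\partial\Omega$, so $X=\bar c\int_{\partial\Omega}\vartheta\,\d s=0$. This contradicts $X\neq0$ without any Kelvin transform.
\item If $B=0$ and the exterior is connected, then $u=0$ on $\partial\Omega$, so again $X=0$.
\end{itemize}
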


Using the layer potential theory and Lemma \ref{I+K-invertible}, we obtain the following theorem.

\begin{thm}\label{well-posedness}
Let $u_\omega$, $u_{2\omega}\in C^2(\mathbb{R}^2)\cap C(\mathbb{R}^2)$ be the classical solutions of \eqref{eq:governing-equation}.
Then $u_\omega$ can be represented as
\begin{equation}\label{eq:sol-u1}
u_\omega = H(x) + \mathcal{S}_{\Omega}[\varphi](x),\quad x\in\mathbb{R}^2,
\end{equation}
where the density function $\varphi \in L_0^2(\p \Omega)$ satisfies
\begin{align}\label{eq:u1-density}
\left(\lambda_{\omega} \mathcal{I} -\mathcal{K}^*_{\Omega}\right)[\varphi]=\frac{\partial H}{\partial \nu} \quad \mbox{on }\p \Omega,
\end{align}
with $\lambda_{\omega}$ given by
\begin{equation}\label{eq:lambda-omega}
 \lambda_\omega = \frac{\varepsilon_\omega+1}{2(\varepsilon_\omega-1)}.
\end{equation}

Moreover, $u_{2\omega}$ can be represented using the double-layer potential $\mathcal{D}_{\Omega}$ and the single-layer potential $\mathcal{S}_{\Omega}$ as follows:
 \begin{equation}\label{eq:sol-u2}
u_{2\omega} = \mathcal{D}_{\Omega}[\phi](x) + \mathcal{S}_{\Omega}[\psi](x),\quad  x\in  \mathbb{R}^2,
\end{equation}
where the pair $(\phi, \psi)\in L_0^2(\p \Omega)\times L_0^2(\p \Omega)$ satisfies
\begin{align}
\label{eq:u2-density}
\begin{cases}
\phi = -4\pi P^s_{\perp} &\quad \mbox{on }\p \Omega, \ms\\
\ds (\lambda_{2\omega} \mathcal{I} -\mathcal{K}^*_{\Omega})[\psi] - \frac{\p \mathcal{D}_{\Omega}}{\p \nu}[\phi] = - 4\pi\frac{\sigma^s + \sigma^b}{\varepsilon_{2\omega}-1} &\quad \mbox{on }\p \Omega. 
\end{cases}
\end{align}
Here $\lambda_{2\omega}$ is given by
\begin{equation}\label{eq:lambda-2omega}
 \lambda_{2\omega} = \frac{\varepsilon_{2\omega}+1}{2(\varepsilon_{2\omega}-1)}.
\end{equation}
Furthermore, there exists a constant $C$ such that
\begin{align}
\label{stability}
\|\phi\|_{L^2(\p \Omega)}+\|\psi\|_{L^2(\p \Omega)}
\leq C \left\|\nabla H \right\|_{L^2(\p \Omega)}.
\end{align}
\end{thm}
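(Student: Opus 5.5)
We use an ansatz: write each potential as a layer-potential expression, check that it satisfies every line of \eqref{eq:governing-equation}, and then fix the densities by the jump relations \eqref{jump-relation-S}--\eqref{jump-relation-D} and Lemma \ref{I+K-invertible}. Uniqueness of classical solutions with the stated decay at infinity shows that this representation is \emph{the} solution.

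\textbf{The field $u_\omega$.} Take $u_\omega = H + \mathcal{S}_\Omega[\varphi]$ with $\varphi\in L^2_0(\p\Omega)$.

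1. The function $u_\omega$ is harmonic off $\p\Omega$ and continuous across it, since $\mathcal{S}_\Omega$ is continuous.
2. Because $\int_{\p\Omega}\varphi\,\d s=0$, the expansion $\ln|x-y| = \ln|x| + O(|x|^{-1})$ gives $\mathcal{S}_\Omega[\varphi]=O(|x|^{-1})$, which is the required behaviour at infinity.
3. Inserting \eqref{jump-relation-S} into the transmission condition gives
\[
\frac{\p H}{\p\nu}+\Bigl(\tfrac12\mathcal{I}+\mathcal{K}^*_\Omega\Bigr)[\varphi]=\varepsilon_\omega\Bigl(\frac{\p H}{\p\nu}+\bigl(-\tfrac12\mathcal{I}+\mathcal{K}^*_\Omega\bigr)[\varphi]\Bigr).
\]
4. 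Dividing by $\varepsilon_\omega-1$ yields exactly \eqref{eq:u1-density}, with $\lambda_\omega$ as in \eqref{eq:lambda-omega}.
5. The datum $\p H/\p\nu$ lies in $L^2_0(\p\Omega)$ by the divergence theorem, since $H$ is harmonic in $\Omega$. Lemma \ref{I+K-invertible} then gives a unique $\varphi$ with
\[
\|\varphi\|_{L^2(\p\Omega)}\le C\|\nabla H\|_{L^2(\p\Omega)}.
\]

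\textbf{The field $u_{2\omega}$.} Take $u_{2\omega}=\mathcal{D}_\Omega[\phi]+\mathcal{S}_\Omega[\psi]$.

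1. By \eqref{jump-relation-D} the double layer jumps by $\mathcal{D}|_+-\mathcal{D}|_-=-\phi$, while the single layer is continuous. The first jump condition therefore forces $\phi=-4\pi P^s_\perp$.
2. The normal derivative of $\mathcal{D}_\Omega[\phi]$ is continuous across $\p\Omega$ for $C^2$ domains and sufficiently regular $\phi$; write $\frac{\p\mathcal{D}_\Omega}{\p\nu}[\phi]$ for it. Substituting into the flux condition gives
\[
(1-\varepsilon_{2\omega})\frac{\p\mathcal{D}_\Omega}{\p\nu}[\phi]+\bigl(\tfrac12(1+\varepsilon_{2\omega})\mathcal{I}+(1-\varepsilon_{2\omega})\mathcal{K}^*_\Omega\bigr)[\psi] = -4\pi\sigma,
\]
where $\sigma$ is the right-hand side charge (with the bulk term $\sigma^b$ included as in \eqref{eq:u2-density}).
3. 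Dividing by $\varepsilon_{2\omega}-1$ gives the second line of \eqref{eq:u2-density}, with $\lambda_{2\omega}$ as in \eqref{eq:lambda-2omega}.
4. For decay, $\mathcal{D}_\Omega[\phi]=O(|x|^{-1})$ always holds. The term $\mathcal{S}_\Omega[\psi]$ is $o(1)$ only when $\int\psi=0$. This must be checked by integrating the second equation over $\p\Omega$, using $\int\mathcal{K}^*_\Omega[\psi]=\tfrac12\int\psi$ and $\int \frac{\p\mathcal{D}_\Omega}{\p\nu}[\phi]=0$ (the latter because the field is harmonic inside with no flux). The right-hand side integrates to zero because $\sigma^s$ is a tangential divergence on a closed curve; $\sigma^b$ is assumed to have zero mean. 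Hence $\int\psi=0$, and Lemma \ref{I+K-invertible} gives a unique $\psi\in L^2_0(\p\Omega)$.

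\textbf{Stability.} This step chains the bounds: $\|\phi\|$ and $\|\sigma^s\|$ are controlled by the traces of $\nabla u_\omega$, and these are controlled through \eqref{jump-relation-S} and the $\varphi$-bound by $\|\nabla H\|$.

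\textbf{Main obstacle.} The difficulty is regularity. Both $P^s_\perp$ and $\sigma^s$ are \emph{quadratic} in $\nabla u_\omega|_-$, and $\sigma^s$ even contains a tangential derivative. Likewise, $\frac{\p\mathcal{D}_\Omega}{\p\nu}$ is a hypersingular operator that is bounded only from $H^1$ to $L^2$. Bounds in $L^2$ alone therefore do not close. My plan is to use that $H$ is smooth near $\p\Omega$ and that $\mathcal{K}^*_\Omega$ is smoothing on $C^2$ boundaries. A bootstrap then puts $\varphi$ in $H^1$ or $C^{0,\alpha}$, so that $\nabla u_\omega|_-$ is bounded in $L^\infty$ and $H^1$ by $\|\nabla H\|$ (on a fixed neighbourhood, via interior estimates for harmonic $H$). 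With that in hand the quadratic terms are controlled, and the constant $C$ absorbs the resulting factor; this matches a linear-in-$\|\nabla H\|$ statement only after that normalization.
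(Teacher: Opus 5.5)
Your derivation of the density equations is the same as the paper's: substitute the layer-potential ansatz into the transmission conditions, apply \eqref{jump-relation-S}--\eqref{jump-relation-D}, and invoke Lemma~\ref{I+K-invertible}. You also check two things the paper leaves implicit. First, $\p H/\p\nu$, $\frac{\p\mathcal{D}_\Omega}{\p\nu}[\phi]$ and $\sigma^s$ have zero mean, so the lemma applies on $L^2_0$. Second, $\int_{\p\Omega}\psi\,\d s=0$ is exactly the condition that gives $u_{2\omega}=o(1)$.

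The genuine difference is the stability estimate. The paper derives \eqref{stability} from ``solvability and the closed graph theorem''. That theorem concerns linear maps, but $H\mapsto(\phi,\psi)$ is quadratic: replacing $H$ by $tH$ multiplies the left side of \eqref{stability} by $t^2$ and the right side by $t$. So no $H$-independent $C$ can exist. Moreover, $\sigma^s$ contains a tangential derivative, so even a quadratic bound needs a norm of $H$ stronger than $\|\nabla H\|_{L^2(\p\Omega)}$. For example, on the unit disk with $H=-Er^\ell\cos(\ell\theta)$, the paper's own formulas show that $\|\psi\|_{L^2(\p\Omega)}$ generically grows like $\ell^3$, while $\|\nabla H\|^2_{L^2(\p\Omega)}$ grows like $\ell^2$.

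Your explicit chain avoids this problem. You bootstrap $\varphi$ into $C^{0,\alpha}\cap H^1$, then bound $P^s_\perp$, $\sigma^s$ and the hypersingular term by norms of $\nabla H$ on a neighbourhood of $\p\Omega$. This is the route that yields a true estimate, of the form $\|\phi\|+\|\psi\|\le C\|\nabla H\|_X^2$ for such a stronger norm $X$. Your normalization caveat is therefore necessary, not cosmetic.

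One point neither you nor the paper checks is $\phi\in L^2_0(\p\Omega)$, and it is false in general. Since $P^s_\perp$ is a square, its mean is typically nonzero; the paper's disk computation gives a $\phi_0$ with nonzero mean. This is harmless, because $\mathcal{D}_\Omega[1]$ equals $1$ in $\Omega$ and $0$ outside, so neither the representation nor the decay is affected.
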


\begin{proof}
Applying the jump formula \eqref{jump-relation-S} for the normal derivative of the single-layer potential, the boundary condition on $\partial\Omega$ satisfied by \eqref{eq:sol-u1} becomes \eqref{eq:u1-density}. By Lemma \ref{I+K-invertible}, this yields a unique density $\varphi$. Similarly, imposing the transmission conditions along $\partial\Omega$ on \eqref{eq:sol-u2} gives \eqref{eq:u2-density}. The density $\phi$ is already expressed in terms of $-4\pi P^s_{\perp}$ using the jump formula \eqref{jump-relation-D}, and therefore it exists and is unique. Lemma \ref{I+K-invertible} also ensures the unique existence of $\psi$. The stability estimate \eqref{stability} then follows from the solvability and the closed graph theorem.

The proof is complete.
\end{proof}

Hence, by Theorem \ref{well-posedness}, the governing equation \eqref{eq:governing-equation} is well-posed.

\section{Asymptotic expansions}\label{sec:sensitivity-analysis}
In this section, we consider geometric symmetry breaking via shape perturbation.  We rigorously derive the asymptotic expansions of the perturbed linear and second harmonic electric fields using the field expansion method \cite{Nicholls2003}, thereby obtaining the leading-order and first-order coupled systems. The representation formulas for the solutions to these systems are also expressed in terms of the layer potential.

%Let $D$ be a bounded domain in $ \mathbb{R}^2$. 
For small $\epsilon \in \mathbb{R}_{+}$, we let $\partial \Omega$ be an $\epsilon$-perturbation of $\Omega$, i.e.,
\begin{align}\label{eq:geometry}
\partial \Omega_\epsilon = \{ x + \epsilon f(x) \nu(x) : x \in \partial \Omega \},
\end{align}
where $\nu$ is the outward unit normal to $\partial \Omega$, and $f$ is an arbitrary smooth function. 

Let $\tilde{\nu}$ and $\widetilde{T}$ be the outward unit normal vector and the unit tangential vector on $\partial \Omega$, respectively. The following expansions of $\tilde{\nu}$ and $\widetilde{T}$ hold \cite{M. Lim2012}:
\begin{align}
\label{eq:nu-expansion}
\tilde{\nu} (\tilde{x})&=\nu(x)-\epsilon f'(x)T(x)+ O\left(\epsilon^2\right),\\
\widetilde{T} (\tilde{x})&= T(x)+\epsilon f'(x)\nu(x)+O\left(\epsilon^2\right). \label{eq:T-expansion}
\end{align}
Here and throughout this paper, $T$ is the unit tangential vector on $\partial \Omega$ and $f'(x)$ is the tangential derivative of $f$ on $\partial \Omega$, i.e., $f'=\frac{\partial f}{\partial T}$.

\subsection{Formal derivations via the field expansion method}
%In this subsection, we formally prove Theorem \ref{thm-expansion} based on the FE method.
We first derive the asymptotic expansion of $u_\omega^\epsilon$, solution to \eqref{eq:governing-equation} with $\Omega_\epsilon$, as $\epsilon$ goes to zero. We start by expanding $u^\epsilon_\omega$ in powers of $\epsilon$, that is
\begin{align}
\label{eq:FF-expansion}
u_\omega^{\epsilon} = u_\omega^{(0)} + \epsilon u_\omega^{(1)} + O\left(\epsilon^2\right),
\end{align}
where $u_\omega^{(n)}$, $n = 0, 1$, are well-defined in $\mathbb{R}^2\setminus\overline{\Omega}$, and satisfy
\begin{equation*}
\begin{cases}
\ds \Delta u_\omega^{(n)} = 0 & \mbox{in } \mathbb{R}^2, \ms \\
\ds u_\omega^{(n)} = \delta_{0,n} H(x) + O\left(|x|^{-1}\right) & \mbox{as } |x| \to +\infty,
\end{cases}
\end{equation*}
with $\delta_{0,n}$ the Kronecker symbol.

For $x\in\partial \Omega$, let $\tilde{x}=x+\epsilon f(x)\nu(x) \in \partial \Omega_\epsilon$. Then we have the following Taylor expansions:
\begin{align}
u^\epsilon_\omega|_{\pm}(\tilde{x}) &= u_\omega^{(0)}|_{\pm}(x) + \epsilon u_\omega^{(1)}|_{\pm}(x) + \epsilon f \frac{\partial u_\omega^{(0)}}{\partial \nu}\Big|_{\pm}(x) + O\left(\epsilon^2\right), \quad x \in \partial \Omega.\label{eq:u-omega-inner-outer}
\end{align}
The normal derivative $\frac{\partial u_\omega^\epsilon}{\partial \tilde{\nu}}(\tilde{x})$ and tangential derivative $\frac{\partial u_\omega^\epsilon}{\partial \widetilde{T}}(\tilde{x})$ on $\partial \Omega_\epsilon$ are given by
\begin{align}\label{normal-derivative-Omega}
\frac{\partial u_\omega^\epsilon}{\partial \tilde{\nu}}(\tilde{x}) = \nabla u_\omega^\epsilon (\tilde{x}) \cdot \tilde{\nu}(\tilde{x}),\\
\frac{\partial u_\omega^\epsilon}{\partial \widetilde{T}}(\tilde{x}) = \nabla u_\omega^\epsilon (\tilde{x}) \cdot \widetilde{T}(\tilde{x}),\label{tangential-derivative-Omega}
\end{align}
where $\tilde{\nu}(\tilde{x})$ and $\widetilde{T}(\tilde{x})$ are defined by \eqref{eq:nu-expansion} and \eqref{eq:T-expansion}, respectively.
To evaluate $\nabla u_\omega^\epsilon (\tilde{x})$ appearing in \eqref{normal-derivative-Omega} and \eqref{tangential-derivative-Omega}, we expand $\nabla u_\omega^\epsilon$ around $\partial \Omega$ and use \eqref{eq:FF-expansion} to obtain
\begin{align}\label{varphi-gradient}
\nabla u_\omega^\epsilon (\tilde{x}) = \nabla u_\omega^{(0)}(x) + \epsilon \nabla u_\omega^{(1)}(x) + \epsilon f \nabla^2 u_\omega^{(0)} \nu(x) + O\left(\epsilon^2\right), \quad x\in \partial \Omega.
\end{align}
It then follows from \eqref{eq:nu-expansion}, \eqref{eq:T-expansion}, \eqref{normal-derivative-Omega}, \eqref{tangential-derivative-Omega} and \eqref{varphi-gradient} that
\begin{align}
\frac{\partial u_\omega^\epsilon}{\partial \tilde{\nu}}\Big|_{\pm} (\tilde{x})
=\frac{\partial u_\omega^{(0)}}{\partial \nu} \Big|_{\pm} (x) + \epsilon \left(\frac{\partial u_\omega^{(1)}}{\partial \nu}\Big|_{\pm} (x) + f \frac{\partial^2 u_\omega^{(0)}}{\partial \nu^2}\Big|_{\pm}(x) - f'\frac{\partial u_\omega^{(0)}}{\partial T} \Big|_{\pm} (x) \right) + O\left(\epsilon^2\right), &\quad x\in \partial \Omega,\label{eq:normal-derivative-Omega-expansion-inner-outer}\\
\frac{\partial u_\omega^\epsilon}{\partial \widetilde{T}}\Big|_{-} (\tilde{x})
=\frac{\partial u_\omega^{(0)}}{\partial T} \Big|_{-} (x) + \epsilon \left(\frac{\partial u_\omega^{(1)}}{\partial T}\Big|_{-} (x) + f \frac{\partial}{\partial \nu}\left(\frac{\partial u_\omega^{(0)}}{\partial T}\right)\Big|_{-}(x) + f'\frac{\partial u_\omega^{(0)}}{\partial \nu} \Big|_{-} (x) \right) + O\left(\epsilon^2\right), &\quad x\in \partial \Omega.\label{eq:tangential-derivative-Omega-expansion}
\end{align}
By using the transmission conditions on $\partial \Omega_\epsilon$, we deduce from \eqref{eq:u-omega-inner-outer} and \eqref{eq:normal-derivative-Omega-expansion-inner-outer} that
\begin{align*}
u_\omega^{(0)}|_{+} = u_\omega^{(0)}|_{-}, & \quad x\in \partial \Omega,\\
\frac{\partial u^{(0)}_\omega}{\partial \nu}\Big|_{+} = \varepsilon_\omega\frac{\partial u^{(0)}_\omega}{\partial \nu}\Big|_{-}, & \quad x\in \partial \Omega,
\end{align*}
and
\begin{align*}
u_\omega^{(1)}|_{+}-u_\omega^{(1)}|_{-} = f\left(\frac{\partial u_\omega^{(0)}}{\partial \nu}\Big|_{-} - \frac{\partial u_\omega^{(0)}}{\partial \nu}\Big|_{+}\right), \quad &x\in \partial \Omega,\\
\frac{\partial u_\omega^{(1)}}{\partial \nu} \Big|_{+} - \varepsilon_\omega\frac{\partial u_\omega^{(1)}}{\partial \nu} \Big|_{-} = f \left(\varepsilon_\omega \frac{\partial^2 u_\omega^{(0)}}{\partial \nu^2}\Big|_{-} - \frac{\partial^2 u_\omega^{(0)}}{\partial \nu^2}\Big|_{+} \right) + f'\left(\frac{\partial u_\omega^{(0)}}{\partial T}\Big|_{+} - \varepsilon_\omega\frac{\partial u_\omega^{(0)}}{\partial T}\Big|_{-} \right), \quad &x\in \partial \Omega.
\end{align*}

To perform a Taylor expansion of the function $\sigma^s$ defined by \eqref{eq:sigma-s}, we parameterize the boundary $\partial \Omega_\epsilon$ as $X(t) = (x_1(t), x_2(t))$. We then have
\begin{align}\label{eq:sigma-s-parameterize}
\sigma^s = -\frac{1}{h} \frac{\d P^s_{\parallel}}{\d t},
\end{align}
where $h$ denotes the Lam\'e coefficient of the boundary curve $\partial \Omega_\epsilon$. By utilizing the perturbation properties of boundaries and knowledge of differential geometry, we have
\begin{align}\label{eq:Lame-expansion}
h = h^{(0)} + \epsilon h^{(0)} \kappa,
\end{align}
where $\kappa$ denotes the curvature of the boundary $\partial \Omega$.

In a similar way, we next expand $u_{2\omega}^\epsilon$, solution to \eqref{eq:governing-equation} with $\Omega_\epsilon$, in powers of $\epsilon$, that is
\begin{align*}
u_{2\omega}^\epsilon = u_{2\omega}^{(0)} + \epsilon u_{2\omega}^{(1)} + O\left(\epsilon^2\right),
\end{align*}
where $u_{2\omega}^{(n)}$, $n = 0, 1$, are well-defined in $\mathbb{R}^2$, and satisfy
\begin{equation*}
\begin{cases}
\ds \Delta u_{2\omega}^{(n)} = 0 & \mbox{in } \mathbb{R}^2, \ms \\
\ds u_{2\omega}^{(n)} = o(1) & \mbox{as } |x| \to +\infty,
\end{cases}
\end{equation*}
with $\delta_{0,n}$ the Kronecker symbol.

For $x\in\partial \Omega$, let $\tilde{x}=x+\epsilon f(x)\nu(x) \in \partial \Omega_\epsilon$. From \eqref{eq:Ps-pre}, \eqref{eq:Ps-par}, \eqref{eq:normal-derivative-Omega-expansion-inner-outer} and \eqref{eq:tangential-derivative-Omega-expansion}, it follows that
\begin{align}
P^{s}_{\perp}(\tilde{x})
&= \chi_{\perp}^s \varepsilon_\omega^2 \left(\frac{\partial u^{(0)}_\omega}{\partial \nu}\Big|_{-} (x)\right)^2 \nonumber \\
&\quad + 2\epsilon \chi_{\perp}^s \varepsilon_\omega^2 \left(\frac{\partial u^{(0)}_\omega}{\partial \nu}\Big|_{-}(x)\right) \left(\frac{\partial u_\omega^{(1)}}{\partial \nu}\Big|_{-} (x) + f \frac{\partial^2 u_\omega^{(0)}}{\partial \nu^2}\Big|_{-}(x) - f'\frac{\partial u_\omega^{(0)}}{\partial T} \Big|_{-} (x) \right) + O\left(\epsilon^2\right) \nonumber \\
&= P^{s,(0)}_{\perp} + \epsilon P^{s,(1)}_{\perp} + O\left(\epsilon^2\right)\label{eq:Ps-per-expansion}
\end{align}
and
\begin{align}
P^{s}_{\parallel}(\tilde{x})
&= 2 \chi_{\parallel}^s \varepsilon_\omega \left(\frac{\partial u^{(0)}_\omega}{\partial \nu}\Big|_{-}(x) \right) \left(\frac{\partial u^{(0)}_\omega}{\partial T}\Big|_{-}(x)\right) \nonumber \\
&\quad + 2\epsilon \chi_{\parallel}^s \varepsilon_\omega \left(\frac{\partial u^{(0)}_\omega}{\partial \nu}\Big|_{-}(x)\right) \left(\frac{\partial u_\omega^{(1)}}{\partial T}\Big|_{-} (x) + f \frac{\partial}{\partial \nu}\left(\frac{\partial u_\omega^{(0)}}{\partial T}\right)\Big|_{-}(x) + f'\frac{\partial u_\omega^{(0)}}{\partial \nu} \Big|_{-} (x) \right) \nonumber \\
&\quad + 2\epsilon \chi_{\parallel}^s \varepsilon_\omega \left(\frac{\partial u^{(0)}_\omega}{\partial T}\Big|_{-}(x)\right) \left(\frac{\partial u_\omega^{(1)}}{\partial \nu}\Big|_{-} (x) + f \frac{\partial^2 u_\omega^{(0)}}{\partial \nu^2}\Big|_{-}(x) - f'\frac{\partial u_\omega^{(0)}}{\partial T} \Big|_{-} (x) \right) + O\left(\epsilon^2\right) \nonumber \\
&= P^{s,(0)}_{\parallel} + \epsilon P^{s,(1)}_{\parallel} + O\left(\epsilon^2\right). \label{eq:Ps-par-expansion}
\end{align}
By \eqref{eq:sigma-s-parameterize}, \eqref{eq:Lame-expansion} and \eqref{eq:Ps-par-expansion}, we have
\begin{align}\label{eq:sigma-s-expansion}
\sigma^s = -\frac{1}{h^{(0)}} \frac{\d P^{s,(0)}_{\parallel}}{\d t} + \epsilon \left(\frac{\kappa}{h^{(0)}} \frac{\d P^{s,(0)}_{\parallel}}{\d t} - \frac{1}{h^{(0)}} \frac{\d P^{s,(1)}_{\parallel}}{\d t}  \right) + O\left(\epsilon^2\right) = \sigma^{s,(0)} + \epsilon \sigma^{s,(1)} + O\left(\epsilon^2\right).
\end{align}

Similar to \eqref{eq:u-omega-inner-outer} and \eqref{eq:normal-derivative-Omega-expansion-inner-outer}, we have the following Taylor expansions:
\begin{align}
u^\epsilon_{2\omega}|_{\pm}(\tilde{x}) &= u_{2\omega}^{(0)}|_{\pm}(x) + \epsilon u_{2\omega}^{(1)}|_{\pm}(x) + \epsilon f \frac{\partial u_{2\omega}^{(0)}}{\partial \nu}\Big|_{\pm}(x) + O\left(\epsilon^2\right), \quad x \in \partial \Omega.\label{eq:u-2omega-inner-outer}
\end{align}
and
\begin{align}
\frac{\partial u_{2\omega}^\epsilon}{\partial \nu}\Big|_{\pm} (\tilde{x})
=\frac{\partial u_{2\omega}^{(0)}}{\partial \nu} \Big|_{\pm} (x) + \epsilon \left(\frac{\partial u_{2\omega}^{(1)}}{\partial \nu}\Big|_{\pm} (x) + f \frac{\partial^2 u_{2\omega}^{(0)}}{\partial \nu^2}\Big|_{\pm}(x) - f'\frac{\partial u_{2\omega}^{(0)}}{\partial T} \Big|_{\pm} (x) \right) + O\left(\epsilon^2\right), &\quad x\in \partial \Omega.\label{eq:normal-derivative-2Omega-expansion-inner-outer}
\end{align}
From \eqref{eq:Ps-per-expansion}, \eqref{eq:sigma-s-expansion}, \eqref{eq:u-2omega-inner-outer} and \eqref{eq:normal-derivative-2Omega-expansion-inner-outer}, the transmission conditions on $\partial \Omega_\epsilon$ immediately yield
\begin{align*}
u^{(0)}_{2\omega}|_{+} - u^{(0)}_{2\omega}|_{-} = 4\pi P^{s,(0)}_{\perp}  &\quad \mbox{on } \partial \Omega,\\
\frac{\partial u^{(0)}_{2\omega}}{\partial \nu} \Big|_{+} - \varepsilon_{2\omega}\frac{\partial u^{(0)}_{2\omega}}{\partial \nu} \Big|_{-} = -4\pi \sigma^{s,(0)}  &\quad \mbox{on } \partial \Omega,
\end{align*}
and
\begin{align*}
u_{2\omega}^{(1)}|_{+} - u_{2\omega}^{(1)}|_{-} &= f\left(\frac{\partial u_{2\omega}^{(0)}}{\partial \nu}\Big|_{-} - \frac{\partial u_{2\omega}^{(0)}}{\partial \nu}\Big|_{+}\right) + 4\pi P^{s,(1)}_{\perp} && \mbox{on } \partial \Omega, \\
\frac{\partial u_{2\omega}^{(1)}}{\partial \nu} \Big|_{+} - \varepsilon_{2\omega}\frac{\partial u_{2\omega}^{(1)}}{\partial \nu} \Big|_{-} &= f \left(\varepsilon_{2\omega} \frac{\partial^2 u_{2\omega}^{(0)}}{\partial \nu^2}\Big|_{-} - \frac{\partial^2 u_{2\omega}^{(0)}}{\partial \nu^2}\Big|_{+} \right) 
 + f'\left(\frac{\partial u_{2\omega}^{(0)}}{\partial T}\Big|_{+} - \varepsilon_{2\omega}\frac{\partial u_{2\omega}^{(0)}}{\partial T}\Big|_{-} \right) \nonumber \\
&\quad - 4\pi \sigma^{s,(1)}  && \mbox{on } \partial \Omega.
\end{align*}

Summarizing the above results, we obtain the following theorem.
\begin{thm}\label{thm-expansion}
Let $u_\omega^\epsilon$ and $u_{2\omega}^\epsilon$ be the solutions to \eqref{eq:governing-equation} with $\Omega_\epsilon$. For $x\in  \mathbb{R}^2$, the following pointwise asymptotic expansions hold
\begin{align*}
  u^\epsilon_\omega(x) = u_\omega^{(0)}(x)+\epsilon u_\omega^{(1)}(x)+ O\left(\epsilon^2\right),
\end{align*}
and
\begin{align*}
  u^\epsilon_{2\omega}(x) =u_{2\omega}^{(0)}(x)+\epsilon u_{2\omega}^{(1)}(x)+ O\left(\epsilon^2\right),
\end{align*}
where the remainder $O(\epsilon^2)$ depends only on the $\mathcal{C}^2$-norm of $\partial \Omega$ and $\mathcal{C}^1$-norm of $f$. $u_\omega^{(0)}$ and $u_{2\omega}^{(0)}$ are the solutions to the following leading-order coupled system
\begin{align}\label{eq:leading-term-governing}
\begin{cases}
\ds \Delta u_\omega^{(0)} = 0  & \mbox{in } \mathbb{R}^2, \ms \\
\ds u_\omega^{(0)}|_{+} = u_\omega^{(0)}|_{-} & \mbox{on }  \partial \Omega,\ms\\
\ds \frac{\partial u^{(0)}_\omega}{\partial \nu}\Big|_{+}=\varepsilon_\omega\frac{\partial u^{(0)}_\omega}{\partial \nu}\Big|_{-}   & \mbox{on } \partial \Omega, \ms \\
\ds  u_\omega = H(x) + O\left(|x|^{-1}\right) & \mbox{as } |x|\rightarrow + \infty,\ms\\
\ds \Delta u^{(0)}_{2\omega}= 0 & \mbox{in }  \mathbb{R}^2, \ms\\
\ds u^{(0)}_{2\omega}|_{+} - u^{(0)}_{2\omega}|_{-} = 4\pi P^{s,(0)}_{\perp} & \mbox{on }  \partial \Omega,\ms\\
\ds \frac{\partial u^{(0)}_{2\omega}}{\partial \nu} \Big|_{+} - \varepsilon_{2\omega}\frac{\partial u^{(0)}_{2\omega}}{\partial \nu} \Big|_{-} = -4\pi \sigma^{s,(0)} & \mbox{on } \partial \Omega,\ms \\
\ds  u^{(0)}_{2\omega} = O\left(|x|^{-2}\right) & \mbox{as } |x|\rightarrow +\infty,
\end{cases}
\end{align} 
and the pair $(u_\omega^{(1)}, u_{2\omega}^{(1)})$ is the unique solution to the following first-order coupled system
\begin{align}\label{eq:first-order-equation}
\begin{cases}
\ds \Delta u_\omega^{(1)}= 0  \quad \ &\mbox{in} \  \mathbb{R}^2,\ms \\
\ds u_\omega^{(1)}|_{+}-u_\omega^{(1)}|_{-}=I_1 \quad &\mbox{on} \ \partial \Omega,\ms \\
\ds \frac{\partial u_\omega^{(1)}}{\partial \nu} \Big|_{+} - \varepsilon_\omega\frac{\partial u_\omega^{(1)}}{\partial \nu} \Big|_{-} = I_2 \quad &\mbox{on } \partial \Omega, \ms \\
\ds u_\omega^{(1)} = O\left(|x|^{-1}\right)\ \ &as\ |x|\rightarrow +\infty,\ms \\
\ds \Delta u^{(1)}_{2\omega}= 0 & \mbox{in }  \mathbb{R}^2, \ms\\
\ds u_{2\omega}^{(1)}|_{+}-u_{2\omega}^{(1)}|_{-} = I_3 \quad &\mbox{on} \ \partial \Omega,\ms \\
\ds \frac{\partial u_{2\omega}^{(1)}}{\partial \nu} \Big|_{+} - \varepsilon_{2\omega}\frac{\partial u_{2\omega}^{(1)}}{\partial \nu} \Big|_{-} = I_4 \quad &\mbox{on } \partial \Omega, \ms \\
\ds  u_{2\omega}^{(1)} = o(1)\ \ &as\ |x|\rightarrow +\infty,
\end{cases}
\end{align}
with
\begin{align}\label{eq:boundary-term}
\begin{cases}
\ds I_1 = f\left(\frac{\partial u_\omega^{(0)}}{\partial\nu}\Big|_{-}-\frac{\partial u_\omega^{(0)}}{\partial\nu}\Big|_{+}\right) \quad &\mbox{on } \partial \Omega,\ms \\
\ds I_2 = f \left(\varepsilon_\omega \frac{\partial^2 u_\omega^{(0)}}{\partial \nu^2}\Big|_{-} - \frac{\partial^2 u_\omega^{(0)}}{\partial \nu^2}\Big|_{+} \right) +  f'\left(\frac{\partial u_\omega^{(0)}}{\partial T}\Big|_{+} - \varepsilon_\omega\frac{\partial u_\omega^{(0)}}{\partial T}\Big|_{-} \right) \quad &\mbox{on } \partial \Omega, \ms \\
\ds I_3 = f\left(\frac{\partial u_{2\omega}^{(0)}}{\partial\nu}\Big|_{-}-\frac{\partial u_{2\omega}^{(0)}}{\partial\nu}\Big|_{+}\right) + 4\pi P^{s,(1)}_{\perp} \quad &\mbox{on} \ \partial \Omega,\ms \\
\ds I_4 = f \left(\varepsilon_{2\omega} \frac{\partial^2 u_{2\omega}^{(0)}}{\partial \nu^2}\Big|_{-} - \frac{\partial^2 u_{2\omega}^{(0)}}{\partial \nu^2}\Big|_{+} \right) +  f'\left(\frac{\partial u_{2\omega}^{(0)}}{\partial T}\Big|_{+} - \varepsilon_{2\omega}\frac{\partial u_{2\omega}^{(0)}}{\partial T}\Big|_{-} \right)  -4\pi \sigma^{s,(1)}  \quad &\mbox{on } \partial \Omega.
\end{cases}
\end{align}
\end{thm}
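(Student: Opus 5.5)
The formal computation above already identifies the leading and first-order systems. What remains is to justify the expansion rigorously, with an $O(\epsilon^2)$ remainder, and to prove that \eqref{eq:first-order-equation} is uniquely solvable. I would work throughout with the layer potential representation of Theorem \ref{well-posedness}, transplanted to $\Omega_\epsilon$ via the diffeomorphism $\Psi_\epsilon(x)=x+\epsilon f(x)\nu(x)$ from $\partial\Omega$ onto $\partial\Omega_\epsilon$.

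\textbf{Step 1: the linear field.} Write $u^\epsilon_\omega = H + \mathcal{S}_{\Omega_\epsilon}[\varphi_\epsilon]$, where $(\lambda_\omega\mathcal{I}-\mathcal{K}^*_{\Omega_\epsilon})[\varphi_\epsilon]=\partial H/\partial\tilde\nu$. Pulling back by $\Psi_\epsilon$, I would expand the kernels in the style of the known perturbation formulas for $\mathcal{K}^*$ (as in \cite{M. Lim2012}):
\[
\mathcal{K}^*_{\Omega_\epsilon}=\mathcal{K}^*_\Omega+\epsilon\mathcal{K}^{(1)}+O(\epsilon^2),
\]
with the remainder in operator norm on $L^2(\partial\Omega)$, controlled by the $C^2$ norm of $\partial\Omega$ and the $C^1$ norm of $f$. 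The right-hand side expands in the same way using \eqref{eq:nu-expansion}. Lemma \ref{I+K-invertible} gives invertibility of $\lambda_\omega\mathcal{I}-\mathcal{K}^*_\Omega$ on $L^2_0$. A Neumann series then yields $\varphi_\epsilon\circ\Psi_\epsilon=\varphi_0+\epsilon\varphi_1+O(\epsilon^2)$. One must check that the mean-zero constraint is preserved up to the Jacobian factor, which holds because $\int_{\partial\Omega_\epsilon}\partial H/\partial\tilde\nu=0$.

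Inserting this into $\mathcal{S}_{\Omega_\epsilon}$ gives a pointwise expansion of $u^\epsilon_\omega(x)$ for $x$ off the boundary. I would then identify $u^{(1)}_\omega$ by checking that the first-order correction satisfies exactly the jump conditions $I_1$ and $I_2$. This is precisely the formal computation leading to \eqref{eq:normal-derivative-Omega-expansion-inner-outer}, now justified because the densities are smooth. Standard regularity for $C^2$ domains makes the traces of second derivatives meaningful, provided one assumes slightly more smoothness or works with a smooth $f$.

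\textbf{Step 2: the second harmonic field.} Next I would substitute the expansion of $u^\epsilon_\omega$ into $P^s_\perp$ and $\sigma^s$, which yields \eqref{eq:Ps-per-expansion} and \eqref{eq:sigma-s-expansion} with controlled remainders. I would then repeat the argument for \eqref{eq:u2-density} on $\Omega_\epsilon$. There $\phi_\epsilon=-4\pi P^s_\perp$ is explicit, and $\psi_\epsilon$ is obtained by inverting $\lambda_{2\omega}\mathcal{I}-\mathcal{K}^*_{\Omega_\epsilon}$, again via a Neumann series. The term $\partial\mathcal{D}_{\Omega_\epsilon}[\phi_\epsilon]/\partial\tilde\nu$ is hypersingular. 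I would handle it with Maue's identity, which rewrites it as a tangential derivative of a single-layer potential of $\partial_T\phi_\epsilon$. That reduces it to operators whose $\epsilon$-expansion is as in Step 1, at the cost of one extra derivative on $\phi_\epsilon$, which is available from the smoothness of $u^\epsilon_\omega$ up to the boundary.

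\textbf{Step 3: uniqueness and the decay conditions.} For \eqref{eq:first-order-equation}, I would represent $u^{(1)}_\omega=\mathcal{D}_\Omega[\alpha]+\mathcal{S}_\Omega[\beta]$. The jump of $\mathcal{D}_\Omega$ forces $\alpha=-I_1$. The normal-derivative condition then reduces to $(\lambda_\omega\mathcal{I}-\mathcal{K}^*_\Omega)[\beta]=g$ for an explicit $g$, and uniqueness follows from Lemma \ref{I+K-invertible} once $g\in L^2_0$. The condition $\int_{\partial\Omega} g=0$ is exactly what the decay $O(|x|^{-1})$ requires, and it should follow from the divergence theorem applied to $u^{(0)}_\omega$. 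The same argument handles $u^{(1)}_{2\omega}$.

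The decay $O(|x|^{-2})$ for $u^{(0)}_{2\omega}$ should follow from computing the dipole moment, which vanishes under the symmetry assumed on $\Omega$. I expect this to be the main obstacle in the statement as written: the decay is not automatic for general $\Omega$. I would first try to prove it assuming $\Omega$ is centrosymmetric and $H$ is odd, as for the uniform field $H=-Ex_1$. In that setting $P^{s,(0)}_\perp$ and $\sigma^{s,(0)}$ are even under $x\mapsto -x$, so the first moments of the densities vanish, and if necessary I would restrict the statement to that setting.
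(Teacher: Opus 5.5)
Your route differs from the paper's. The paper's argument is only the formal field-expansion computation that precedes the statement. It posits $u^\epsilon=u^{(0)}+\epsilon u^{(1)}+O(\epsilon^2)$, Taylor-expands traces and normal/tangential derivatives at $\tilde x=x+\epsilon f\nu$, expands $P^s_\perp$, $P^s_\parallel$ and $\sigma^s$ through the Lam\'e coefficient, and matches powers of $\epsilon$ in the transmission conditions. The remainder bound, uniqueness and the decay conditions are asserted, not proved. You instead pull back to $\partial\Omega$ and expand $\mathcal{K}^*_{\Omega_\epsilon}$. An $L^2$ operator-norm expansion with the claimed $C^2$/$C^1$ dependence should be obtainable via Calder\'on-commutator bounds. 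You then invert by a Neumann series and identify the first-order term by its jump data. For instance, the first-order part of $\mathcal{S}_{\Omega_\epsilon}[\varphi_\epsilon](x)$ contains $\mathcal{D}_\Omega[f\varphi_0](x)$, whose jump is $-f\varphi_0=I_1$. The formal route produces $I_1,\dots,I_4$ cheaply; yours is what an actual proof needs, and it shows the statement is not correct as written.

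Your decay objection is right. With $o(1)$ decay the leading-order second-harmonic problem is already uniquely solvable, so $O(|x|^{-2})$ is an extra condition that generally fails. The paper's own computation in Section~\ref{sec:proof6} gives a counterexample: the disk with $H=-E(r^m\cos m\theta+r^\ell\cos\ell\theta)$, $|m-\ell|=1$, yields a leading-order field with a dipole term. Your parity fix works, and you can weaken ``$H$ odd'' to ``$H$ of definite parity''. The sources are quadratic in $\nabla u_\omega$, so $P^{s,(0)}_\perp$ and $\sigma^{s,(0)}$ are even in both cases. This also covers $H=-Er^\ell\cos(\ell\theta)$ with $\ell$ even.

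Two steps need more than you wrote.

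\textbf{Compatibility.} Using $\partial_\nu^2u=-\kappa\,\partial_\nu u-\partial_s^2u$ on $\partial\Omega$ together with the transmission conditions gives $I_2=(1-\varepsilon_\omega)\,\partial_s\bigl(f\,\partial_s u^{(0)}_\omega\bigr)$. This yields $\int_{\partial\Omega}I_2\,\d s=0$ and removes the need for second-derivative traces. For $I_4$, the same manipulation turns the $f$- and $f'$-terms into $-4\pi\kappa f\sigma^{s,(0)}$ plus an exact tangential derivative. So excluding a $\log|x|$ term in $u^{(1)}_{2\omega}$ requires $\int_{\partial\Omega}\sigma^{s,(1)}\,\d s=-\int_{\partial\Omega}\kappa f\sigma^{s,(0)}\,\d s$. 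This holds with the correct Lam\'e factor $h=h^{(0)}(1+\epsilon\kappa f)$, but not with \eqref{eq:Lame-expansion} as printed. Your ``the same argument handles $u^{(1)}_{2\omega}$'' therefore needs that correction.

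\textbf{Step 2 is where the real work is.} Step 1 gives expansions at fixed $x\notin\partial\Omega$. But $P^s$ and $\sigma^s$ need the pulled-back traces $\partial_{\tilde\nu}u^\epsilon_\omega|_-$ and $\partial_{\tilde T}u^\epsilon_\omega|_-$ on $\partial\Omega_\epsilon$. These must be expanded in a norm that survives products and one further tangential derivative (for $\sigma^s$, and again inside Maue's identity). $L^2$ commutator estimates do not give this; you need H\"older-space versions of the operator expansions. Moreover, $\sigma^{s,(1)}$ already contains $f''$ and third derivatives of $u^{(0)}_\omega$, so you must raise the hypotheses (e.g.\ $\partial\Omega$ and $f$ smooth). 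The claimed dependence of the remainder only on the $C^2$-norm of $\partial\Omega$ and the $C^1$-norm of $f$ will not come out for $u_{2\omega}$. The remainder is also not uniform up to $\partial\Omega$. State the result with these restrictions.
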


\subsection{Representation formulas}\label{sec:Representation-formula}
In this subsection, we establish the representation formulas for the solutions to the leading-order and first-order coupled systems \eqref{eq:leading-term-governing} and \eqref{eq:first-order-equation} in Theorem \ref{thm-expansion}.

Similar to Theorem \ref{well-posedness}, the solution $u_\omega^{(0)}$ to system \eqref{eq:leading-term-governing} can be expressed as
\begin{equation}\label{eq:u-omega-leading}
u_\omega^{(0)} = H(x) + \mathcal{S}_{\Omega}[\varphi_0](x),\quad x\in\mathbb{R}^2,
\end{equation}
where the density function $\varphi_0 \in L_0^2(\p \Omega)$ satisfies
\begin{align*}
%\label{eq:u-omega-leading-density}
(\lambda_{\omega} \mathcal{I} -\mathcal{K}^*_{\Omega})[\varphi_0] = \frac{\partial H}{\partial \nu} \quad \mbox{on } \p \Omega,
\end{align*}
with $\lambda_{\omega}$ given by \eqref{eq:lambda-omega}.

Furthermore, $u^{(0)}_{2\omega}$ can be represented using the double-layer potential $\mathcal{D}_{\Omega}$ and the single-layer potential $\mathcal{S}_{\Omega}$ as follows:
\begin{equation}\label{eq:u2-omega-leading}
u^{(0)}_{2\omega} =  \mathcal{D}_{\Omega}[\phi_0](x) + \mathcal{S}_{\Omega}[\psi_0](x),\quad x\in \mathbb{R}^2,
\end{equation}
where the pair $(\phi_0, \psi_0)\in L_0^2(\p \Omega)\times L_0^2(\p \Omega)$ satisfies
\begin{align}
\label{eq:u2-omega-leading-density}
\begin{cases}
\phi_0 = -4\pi P^{s,(0)}_{\perp} &\quad \mbox{on } \p \Omega, \\[4pt]
\ds (\lambda_{2\omega} \mathcal{I} -\mathcal{K}^*_{\Omega})[\psi_0] - \frac{\partial \mathcal{D}_{\Omega}}{\partial \nu}[\phi_0] = - 4\pi\frac{\sigma^{s,(0)} + \sigma^{b,(0)}}{\varepsilon_{2\omega}-1} &\quad \mbox{on } \p \Omega.
\end{cases}
\end{align}
Here $\lambda_{2\omega}$ is given by \eqref{eq:lambda-2omega}.

Moreover, the solution $u_{\omega}^{(1)}$ to \eqref{eq:first-order-equation} can be represented as
\begin{align}\label{eq:u-omega-first}
u_{\omega}^{(1)} = \mathcal{D}_{\Omega}[\varphi_1](x) + \mathcal{S}_{\Omega}[\varphi_2](x), \quad x\in\mathbb{R}^2,
\end{align}
where the pair $(\varphi_1, \varphi_2)\in L_0^2(\p \Omega)\times L_0^2(\p \Omega)$ satisfies
\begin{align}
\label{eq:u-omega-first-density}
\begin{cases}
\varphi_1 = -I_1 &\quad \mbox{on } \p \Omega, \\[4pt]
\ds (\lambda_{\omega} \mathcal{I} -\mathcal{K}^*_{\Omega})[\varphi_2] - \frac{\partial \mathcal{D}_{\Omega}}{\partial \nu}[\varphi_1] = I_2/(\varepsilon_{\omega}-1) &\quad \mbox{on } \p \Omega.
\end{cases}
\end{align}

Similarly, the solution $u_{2\omega}^{(1)}$ to \eqref{eq:first-order-equation} admits the representation
\begin{align}\label{eq:u2-omega-first}
u_{2\omega}^{(1)} =  \mathcal{D}_{\Omega}[\phi_1](x) + \mathcal{S}_{\Omega}[\psi_1](x), \quad x\in\mathbb{R}^2,
\end{align}
where the pair $(\phi_1, \psi_1)\in L_0^2(\p \Omega)\times L_0^2(\p \Omega)$ satisfies
\begin{align}
\label{eq:u2-omega-first-density}
\begin{cases}
\phi_1 = -I_3 &\quad \mbox{on } \p \Omega,\ms \\
\ds (\lambda_{2\omega} \mathcal{I} -\mathcal{K}^*_{\Omega})[\psi_1] - \frac{\partial \mathcal{D}_{\Omega}}{\partial \nu}[\phi_1] =  I_4/(\varepsilon_{2\omega}-1) &\quad \mbox{on } \p \Omega.
\end{cases}
\end{align}

\section{Nonlinear harmonic generation via geometric symmetry perturbations in a uniform background field}\label{sec:proof}
In this section, we consider the second harmonic generation under a uniform background field, i.e., $H(x) = -E x_1$, where $E$ is a positive constant and the domain $\Omega$ is a disk. We calculate the explicit form of the perturbation solution.
Throughout this section, we set $\Omega =\{|x|<r_0\}$. Then, from \eqref{eq:geometry}, $\p \Omega_\epsilon$ in polar coordinates is expressed as
\begin{align*}
\begin{cases}
\ds  x_{1}=(r_0+\epsilon f(\theta))\cos\theta,\ms\\
\ds  x_{2}=(r_0+\epsilon f(\theta))\sin\theta,
\end{cases}
\end{align*}
where $x =(x_{1}, x_{2})\in \p \Omega_\epsilon$.

One can easily see from \cite{Ammari2013-1} that for each positive integer $n$,
\begin{align}\label{eq:S-circle}
\mathcal{S}_\Omega [\e^{\i n \theta}] (x) =
\begin{cases}
\ds -\frac{r_0}{2n} \left( \frac{r}{r_0} \right)^{n} \e^{\i n\theta} & \text{if } |x| = r < r_0, \ms\\
\ds -\frac{r_0}{2n} \left( \frac{r_0}{r} \right)^{n} \e^{\i n\theta} & \text{if } |x| = r > r_0,
\end{cases}
\end{align}
and
\begin{align}\label{eq:K}
\mathcal{K}_{\Omega}^{*} [\e^{\i n\theta}] = 0 \quad \forall n \neq 0.
\end{align}

We also get
\begin{align}\label{eq:D-circle}
\mathcal{D}_\Omega [\e^{\i n\theta}] (x) =
\begin{cases}
\ds \frac{1}{2} \left( \frac{r}{r_0} \right)^{n} \e^{\i n\theta} & \text{if } |x| = r < r_0, \ms\\
\ds -\frac{1}{2} \left( \frac{r_0}{r} \right)^{n} \e^{\i n\theta} & \text{if } |x| = r > r_0,
\end{cases}
\end{align}
and
\begin{equation}\label{eq:D-circle-1}
\mathcal{D}_\Omega[1](x) =
\begin{cases} 
\ds 1 & \text{if } |x| = r < r_0, \ms \\ 
\ds 0 & \text{if } |x| = r > r_0.
\end{cases}
\end{equation}

We are ready to present the definitions of symmetry for the geometry of the material boundary.
\begin{defn}\label{defn:symmetry-breaking}
The two-dimensional geometric shape is called geometric symmetry breaking if it lacks inversion symmetry.
\end{defn}

\begin{rem}
Inversion symmetry in two-dimensional space refers to the transformation that maps any point 
$(x_1,x_2)$ to $(-x_1,-x_2)$, also known as central symmetry. This operation is equivalent to a \(180^\circ\) rotation about the origin and therefore does not alter chirality. A geometry possesses inversion symmetry if it remains unchanged under this transformation, such as a circle or a regular polygon with an even number of sides centered at the origin. 
\end{rem}

Although $f$ can be an arbitrary smooth function, for analytical convenience, we set $f =r_0 \cos(n\theta)$ in this paper. Then the boundary defined by \eqref{eq:geometry} can be rewritten as $r_s(\theta) = r_0(1+ \epsilon\cos(n \theta))$ in polar coordinates, as shown in Figure \ref{fig:perturbation-geometry}. It is worth noting that the boundary for $n=1$ would still remain circular with its center shifted by a distance equal to $\epsilon$ (see Figure \ref{fig:perturbation-geometry} (a)). As the original boundary’s geometry remains almost unchanged in this case, we therefore omit the case $n=1$ from consideration in the present study. Finally, according to Definition \ref{defn:symmetry-breaking}, it is evident that when $n\geq3$ is odd, the boundary exhibits symmetry breaking. Moreover, it also possesses rich symmetry properties, including rotations and reflections. To characterize these symmetries, we introduce the following symmetry degree.

\begin{figure}
  \centering
\begin{tikzpicture}
\foreach \angle in {0,90,180} {
    \draw[white, dashed, thin] (\angle:1.5) -- (\angle+180:1.5);
  }
\draw[red, domain=0:360, samples=200, smooth] 
   plot ({\x}: {1});
  \draw[domain=0:360, samples=200, smooth] 
    plot ({\x}: {1 + 0.2*cos(1*\x)});
  \node at (0, -2){$\text{(a) } n=1$};
\end{tikzpicture} 
\qquad 
\begin{tikzpicture}
  \draw[red, domain=0:360, samples=200, smooth] 
   plot ({\x}: {1});
\foreach \angle in {0,90,180} {
    \draw[blue, dashed, thin] (\angle:1.5) -- (\angle+180:1.5);
  }
  \draw[domain=0:360, samples=200, smooth] 
    plot ({\x}: {1 + 0.2*cos(2*\x)});
  \node at (0, -2){$\text{(b) } n=2$};
\end{tikzpicture} 
\qquad 
\begin{tikzpicture}
  \draw[red, domain=0:360, samples=200, smooth] 
   plot ({\x}: {1});
    \foreach \angle in {0,60,120} {
\draw[blue, dashed, thin] (\angle:1.5) -- (\angle+180:1.5);
  }
  \draw[domain=0:360, samples=200, smooth] 
    plot ({\x}: {1 + 0.2*cos(3*\x)});
  \node at (0, -2) {$\text{(c) } n=3$};
\end{tikzpicture}\\
\begin{tikzpicture}
\foreach \angle in {0,45,90,135} {
    \draw[blue, dashed, thin] (\angle:1.5) -- (\angle+180:1.5);
  }
  \draw[domain=0:360, samples=200, smooth] 
    plot ({\x}: {1 + 0.2*cos(4*\x)});
     \draw[red, domain=0:360, samples=200, smooth] 
   plot ({\x}: {1});
  \node at (0, -2) {$\text{(d) } n=4$};
\end{tikzpicture}
\qquad 
\begin{tikzpicture}
\foreach \angle in {0,36,72,108,144} {
    \draw[blue, dashed, thin] (\angle:1.5) -- (\angle+180:1.5);
  }
     \draw[red, domain=0:360, samples=200, smooth] 
   plot ({\x}: {1});
  \draw[domain=0:360, samples=200, smooth] 
    plot ({\x}: {1 + 0.2*cos(5*\x)});
  \node at (0, -2) {$\text{(e) } n=5$};
\end{tikzpicture}
\qquad 
\begin{tikzpicture}
\foreach \angle in {0,30,60,90,120,150} {
    \draw[blue, dashed, thin] (\angle:1.5) -- (\angle+180:1.5);
  }
   \draw[red, domain=0:360, samples=200, smooth] 
   plot ({\x}: {1});
  \draw[domain=0:360, samples=200, smooth] 
    plot ({\x}: {1 + 0.2*cos(6*\x)});
  \node at (0, -2){$\text{(f) } n=6$};
\end{tikzpicture}
\caption{Schematic of perturbation geometry (black solid line). The red solid and blue dashed lines correspond to a perfect circle and the reflection line, respectively. The reflection line for $n=1$ is omitted due to its infinite number.}\label{fig:perturbation-geometry}
\end{figure}

\begin{defn}\label{defn:symmetry-degree}
For a two-dimensional geometric shape, if there exists a point such that all rotations about that point and reflections across lines passing through that point map the shape onto itself, then the number of all such transformations (including the identity transformation) is called the symmetry degree of the shape. If there are infinitely many such transformations, the shape is said to have infinite symmetry degree.
\end{defn}

In fact, for the geometry studied in this paper, the symmetry degree defined in Definition \ref{defn:symmetry-degree} equals the order of the dihedral group $D_n$. Based on the properties of dihedral groups in Section \ref{sec:group-theory}, we know that when $n=3$, the geometry exhibits symmetry breaking and possesses the lowest degree of symmetry.

We will present the main results on how the symmetry of the geometry influences dipole radiation and multipole radiation.

%We are ready to present the proof of Theorem \ref{thm:mainthm1}.
\begin{thm}\label{thm:mainthm1}
Let the boundary of $\Omega_\epsilon$ be defined by (\ref{eq:geometry}) with $f = r_0 \cos(n\theta)$, and assume that the fundamental and second-harmonic fields satisfy the governing system (\ref{eq:governing-equation}) with $\Omega_\epsilon$. If the domain $\Omega_\epsilon$ is a disk, i.e., $\Omega_\epsilon = \Omega $ or $\epsilon=0$, then dipole radiation is forbidden.
\end{thm}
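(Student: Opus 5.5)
We prove the statement by solving the $\epsilon=0$ problem explicitly on the disk $\Omega=\{|x|<r_0\}$ with $H(x)=-Ex_1=-Er\cos\theta$. We then read off the angular Fourier content of $u_{2\omega}$ outside the disk. The mechanism is that the surface sources are quadratic in the linear field, which contains only the first harmonic, so the sources contain only the $0$-th and $2$nd harmonics and never $e^{\pm\i\theta}$.

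\emph{Step 1 (linear field).} By \eqref{eq:u-omega-leading} we have $u_\omega^{(0)}=H+\mathcal S_\Omega[\varphi_0]$ with $(\lambda_\omega\mathcal I-\mathcal K^*_\Omega)[\varphi_0]=\p H/\p\nu=-E\cos\theta$. Using \eqref{eq:K}, this gives $\varphi_0=-(E/\lambda_\omega)\cos\theta$. Then \eqref{eq:S-circle} shows that inside the disk $u_\omega^{(0)}=-C\,r\cos\theta$ for an explicit constant $C$ (a multiple of $E$ depending on $\varepsilon_\omega$). Hence $\p_\nu u_\omega^{(0)}|_-=-C\cos\theta$ and $\p_T u_\omega^{(0)}|_-=C\sin\theta$ on $r=r_0$, with the tangential derivative taken as $r_0^{-1}\p_\theta$.

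\emph{Step 2 (sources).} From \eqref{eq:Ps-pre}, $P^s_\perp=\chi^s_\perp\varepsilon_\omega^2C^2\cos^2\theta=\tfrac12\chi^s_\perp\varepsilon_\omega^2C^2(1+\cos2\theta)$. From \eqref{eq:Ps-par}, $P^s_\parallel=-2\chi^s_\parallel\varepsilon_\omega C^2\cos\theta\sin\theta=-\chi^s_\parallel\varepsilon_\omega C^2\sin2\theta$. Then \eqref{eq:sigma-s} gives $\sigma^s=-r_0^{-1}\p_\theta P^s_\parallel\propto\cos2\theta$. On a disk the bulk term $\sigma^b$ is of the same quadratic form and so also lies in $\mathrm{span}\{1,\cos2\theta\}$; if it is absent it simply contributes nothing.

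\emph{Step 3 (second harmonic).} We solve \eqref{eq:u2-omega-leading} and \eqref{eq:u2-omega-leading-density}, or equivalently use separation of variables. Writing $u_{2\omega}=a_0+\sum_k(a_kr^k+\dots)$ inside and $b_0+b_0'\ln r+\sum_k b_kr^{-k}\cos k\theta$ outside, the jump data contain only the modes $k=0,2$. Since the Fourier modes decouple on a circle (cf. \eqref{eq:S-circle}, \eqref{eq:D-circle}, \eqref{eq:D-circle-1}), all coefficients with $k\neq0,2$ vanish.

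For the mode $k=0$, the flux jump $\sigma^s$ has zero mean, so $b_0'=0$. The condition $u_{2\omega}=o(1)$ then forces $b_0=0$; the constant jump $4\pi P^s_\perp$ is absorbed by an interior constant, consistent with $\mathcal D_\Omega[1]$ vanishing outside. What remains outside is therefore $u_{2\omega}=b_2r_0^2r^{-2}\cos2\theta=O(|x|^{-2})$, which is quadrupolar. In particular the $r^{-1}$ (dipole) coefficient is zero, so dipole radiation is forbidden.

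The only delicate point is bookkeeping of the mode $k=0$, namely checking that no $\ln r$ or constant term survives. This follows from $\int_{\p\Omega}\sigma^s\,\d s=0$, because $\sigma^s$ is a tangential divergence, and from the decay condition. As an alternative cross-check, inversion $x\mapsto-x$ maps the data to themselves (the quadratic sources are even), so $u_{2\omega}$ is even, and an even exterior harmonic function decaying at infinity has no $r^{-1}e^{\pm\i\theta}$ component.
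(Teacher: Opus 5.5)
Your proposal is correct and follows essentially the same route as the paper: you solve the $\epsilon=0$ linear problem explicitly on the disk, observe that $P^{s,(0)}_\perp$ and $\sigma^{s,(0)}$ contain only the constant and $\cos 2\theta$ modes (with $\sigma^{s,(0)}$ mean-free, so no $\ln r$ term appears), and read off that the exterior second-harmonic field is the pure quadrupole $r^{-2}\cos 2\theta$. The paper instead writes out the densities $\phi_0,\psi_0$ and the explicit exterior coefficient $\bigl(\chi^s_\perp\varepsilon_\omega^2\varepsilon_{2\omega}+2\chi^s_\parallel\varepsilon_\omega\bigr)\frac{8\pi E^2}{(1+\varepsilon_\omega)^2(1+\varepsilon_{2\omega})}r_0^2$ (which your mode-$2$ transmission conditions reproduce), and your inversion-symmetry cross-check is a valid additional structural argument that the paper does not use.
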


\begin{proof}
%Let $H(x) = - E r \cos\theta$. 
From the layer potential theory in Subsection \ref{sec:Representation-formula}, we have
\begin{equation*}
u_\omega^{(0)} =
H(x) + \mathcal{S}_{\Omega}[\varphi_0](x),\quad x\in\mathbb{R}^2,
\end{equation*}
where
\begin{align*}
\left(\lambda_{\omega} \mathcal{I} - \mathcal{K}^*_{\Omega}\right)[\varphi_0]= \frac{\partial H}{\partial r}\Big |_{r=r_0}.
\end{align*}
By \eqref{eq:K} and simple calculation, we obtain
\begin{align}\label{eq:varphi-circle}
  \varphi_0 = -\lambda_\omega^{-1} E \cos \theta.
\end{align}
Substituting \eqref{eq:varphi-circle} into \eqref{eq:u-omega-leading} and using \eqref{eq:S-circle}, we obtain the leading-order solution to \eqref{eq:leading-term-governing}
\begin{align}\label{eq:u-omega-leading-term}
 u_\omega^{(0)} =
  \begin{cases}
\ds -E\frac{2}{1+\varepsilon_\omega} r \cos\theta, &\quad r<r_0,
\vspace{1em}\\
\ds -E\left(r + \frac{1-\varepsilon_\omega}{1+\varepsilon_\omega} \frac{r_0^2}{r}\right) \cos\theta, &\quad r>r_0.
\end{cases}
 \end{align}

We next compute the leading-order solution for the second harmonic field. To this end, we first calculate $P_{\perp}^{s,(0)}$, $F^{(0)}$, $\sigma^{s,(0)}$ and $\sigma^{b,(0)}$. By straightforward calculation, we have
\begin{align}\label{eq:P-s_perp-lead}
P^{s,(0)}_{\perp}
= \chi_{\perp}^s \left(\varepsilon_\omega\frac{\p u^{(0)}_\omega}{\p \nu}\Big|_{-}\right)^2 
= 2\chi_{\perp}^s E^2  \left(\frac{\varepsilon_\omega}{1+\varepsilon_\omega}\right)^2  (1+\cos(2\theta)),
\end{align}
and 
\begin{align}\label{eq:sigma-s_perp-lead}
\sigma^{s,(0)} =  -\frac{1}{r_0} \frac{\d P^{s,(0)}_{\parallel}}{\d \theta}
= 8 \chi^s_{\parallel} E^2  \frac{\varepsilon_\omega}{r_0(1+\varepsilon_\omega)^2}  \cos(2\theta) .
\end{align}
Hence, solving equation \eqref{eq:u2-omega-leading-density} with \eqref{eq:P-s_perp-lead} and \eqref{eq:sigma-s_perp-lead} yields
\begin{align}\label{eq:u2-omega-leading-density-circle}
\begin{cases}
\ds  \phi_0 =  -8\pi \chi_{\perp}^s E^2  \Big(\frac{\varepsilon_\omega}{1+\varepsilon_\omega}\Big)^2  (1+\cos(2\theta)), \ms \\
\ds  \psi_0 =  - 8\pi\lambda_{2\omega}^{-1} E^2 \frac{1}{r_0(1+\varepsilon_\omega)^2} \Big(\chi_{\perp}^s \varepsilon_{\omega}^2 +\frac{4\chi^s_{\parallel} \varepsilon_\omega }{\varepsilon_{2\omega}-1} \Big) \cos(2\theta).  
\end{cases}  
\end{align}
Substituting \eqref{eq:u2-omega-leading-density-circle} into the integral expression \eqref{eq:u2-omega-leading} and using \eqref{eq:S-circle}, \eqref{eq:D-circle} and \eqref{eq:D-circle-1}, we obtain the leading-order solution for the second harmonic field
\begin{align}\label{eq:u2-omega-leading-circle}
  u_{2\omega}^{(0)} =\begin{cases}
                      \ds  - \chi_{\perp}^s\varepsilon_\omega^2    \frac{8\pi E^2}{(1+\varepsilon_\omega)^2} - \left(\chi_{\perp}^s \varepsilon_{\omega}^2 -2\chi^s_{\parallel} \varepsilon_{\omega}\right) \frac{ 8\pi E^2 }{(1+\varepsilon_\omega)^2(1+\varepsilon_{2\omega})}\frac{r^{2}}{r_0^2}\cos(2 \theta), & r<r_0, \ms \\[8pt]
                      \ds  \left(\chi_{\perp}^s \varepsilon_{\omega}^2 \varepsilon_{2\omega} +2\chi^s_{\parallel} \varepsilon_{\omega}\right)  \frac{8\pi E^2}{(1+\varepsilon_\omega)^2(1+\varepsilon_{2\omega})}\frac{r_0^2}{r^{2}}\cos(2 \theta), & r>r_0.
                     \end{cases}
\end{align}

From \eqref{eq:u2-omega-leading-circle}, it follows that for a perfect cylindrical nanowire, the leading-order solution for the second harmonic on the outside is solely the quadrupole term, decaying as \(O\big(1/r^2\big)\). Hence, dipole radiation is forbidden.

The proof is complete.
\end{proof}

%We next give the proof of Theorem \ref{thm:mainthm2}.

\begin{thm}\label{thm:mainthm2}
Under the same assumptions as in Theorem \ref{thm:mainthm1}, dipole radiation exists if the boundary $\p \Omega_\epsilon$ exhibits geometric symmetry breaking and possesses the lowest degree of symmetry, that is, $f = r_0 \cos(3\theta)$. If $\p \Omega_\epsilon$ possesses a $2n$-order degree of symmetry, i.e., $f(\theta)= r_0 \cos(n\theta)$, then the second-harmonic field is $2^{n-2}$-multipole radiation.
\end{thm}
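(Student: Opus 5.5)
The plan is to compute the first-order correction $u_{2\omega}^{(1)}$ explicitly in Fourier modes on the circle $|x|=r_0$, using the representation formula \eqref{eq:u2-omega-first}--\eqref{eq:u2-omega-first-density} together with \eqref{eq:S-circle}, \eqref{eq:K}, \eqref{eq:D-circle}, \eqref{eq:D-circle-1}. Theorem \ref{thm:mainthm1} already gives the leading terms: $u_\omega^{(0)}$ in \eqref{eq:u-omega-leading-term} carries only the mode $\e^{\pm\i\theta}$, and $u_{2\omega}^{(0)}$ in \eqref{eq:u2-omega-leading-circle} carries only modes $0$ and $\pm 2$, with exterior part $O(r^{-2})$. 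Hence the far field of $u^\epsilon_{2\omega}=u^{(0)}_{2\omega}+\epsilon u^{(1)}_{2\omega}+O(\epsilon^2)$ is governed by the lowest nonzero angular mode $m\ge1$ in the exterior expansion of $u^{(1)}_{2\omega}$, which by \eqref{eq:S-circle} and \eqref{eq:D-circle} decays like $r^{-m}$.

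\textbf{Step 1 (linear field).} With $f=r_0\cos(n\theta)$ and $u_\omega^{(0)}\propto \cos\theta$, the jump data $I_1,I_2$ in \eqref{eq:boundary-term} are products of $\cos(n\theta)$ or $\sin(n\theta)$ with $\cos\theta$ or $\sin\theta$. They therefore contain only the modes $n\pm1$. Since $\mathcal K^*_\Omega$ annihilates nonzero modes, \eqref{eq:u-omega-first-density} gives $\varphi_1,\varphi_2$ explicitly in modes $n\pm1$, so $u_\omega^{(1)}$ lives in modes $n\pm1$.

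\textbf{Step 2 (second-harmonic sources).} $P^{s,(1)}_\perp$ and $P^{s,(1)}_\parallel$ in \eqref{eq:Ps-per-expansion}--\eqref{eq:Ps-par-expansion} are bilinear in a mode-$1$ factor (from $u^{(0)}_\omega$) and a mode-$(n\pm1)$ factor (from $u^{(1)}_\omega$, or $f,f'$ times $u^{(0)}_\omega$). They thus contain only the modes $n$, $n\pm2$. The same holds for $\sigma^{s,(1)}$ via \eqref{eq:sigma-s-expansion}, since the curvature $\kappa=1/r_0$ is constant.

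The terms of $I_3,I_4$ coming from $f$ and $f'$ times $u^{(0)}_{2\omega}$ (modes $0,\pm2$) produce modes $n$ and $n\pm2$ as well. Solving \eqref{eq:u2-omega-first-density} mode by mode, $u^{(1)}_{2\omega}$ is a combination of modes $n-2,\,n,\,n+2$. Its exterior part therefore decays like $r^{-(n-2)}$ at worst, provided the mode-$(n-2)$ coefficient is nonzero. For $n=3$ this gives a mode-$1$ term $\propto \cos\theta/r$, which is dipole radiation. For general $n$ the field is $2^{n-2}$-multipole radiation in the sense of the Definition. For even $n$ this is consistent with the inversion symmetry, since only even modes appear.

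\textbf{Main obstacle.} The main obstacle is showing that the mode-$(n-2)$ coefficient, in particular the $\cos\theta$ coefficient when $n=3$, does not vanish identically. I will collect this coefficient explicitly as a rational expression in $\varepsilon_\omega,\varepsilon_{2\omega}$ multiplied by linear combinations of $\chi^s_\perp,\chi^s_\parallel$, times $E^2$. This requires tracking carefully the $\cos(n\theta)\cos\theta$ and $\sin(n\theta)\sin\theta$ products, using $\cos a\cos b=\tfrac12[\cos(a-b)+\cos(a+b)]$.

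One case needs separate handling: for $n=2$ the "mode $n-2$" is the constant mode. A constant density must be treated using $L^2_0$ and \eqref{eq:D-circle-1}, and the statement is then read as $O(1)$ versus $o(1)$.

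Generic non-vanishing of the coefficient (for all but a measure-zero set of parameters) is what I will actually establish; the expected interpretation is that the theorem asserts existence for generic material constants.
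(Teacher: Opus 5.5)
Your plan is essentially the paper's proof. The paper also computes $I_1,\dots,I_4$ mode by mode for $f=r_0\cos(n\theta)$, solves \eqref{eq:u-omega-first-density} and \eqref{eq:u2-omega-first-density} with \eqref{eq:S-circle}--\eqref{eq:D-circle-1}, and obtains $u^{(1)}_{2\omega}=M_{n-2}r^{2-n}\cos((n-2)\theta)+M_nr^{-n}\cos(n\theta)+M_{n+2}r^{-n-2}\cos((n+2)\theta)$. It writes $M_{n-2}$ out explicitly in \eqref{eq:Mn-2}, a rational expression that is generically nonzero, which settles your ``main obstacle''; it then reads off the $\cos\theta/r$ term for $n=3$.

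One correction, shared with the paper's own reading: for $n\ge5$ the $r^{-2}$ quadrupole term of $u^{(0)}_{2\omega}$ still dominates the far field of $u^\epsilon_{2\omega}$. So your ``Hence the far field is governed by $u^{(1)}_{2\omega}$'' holds only for $n=3$ (and trivially $n=4$), and the $2^{n-2}$-multipole claim must be understood as a statement about the first-order correction $\epsilon u^{(1)}_{2\omega}$.
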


\begin{proof}
For simplicity, we first write $f$ as 
\begin{align}\label{f-series}
f(\theta) = r_0\cos(n\theta), \quad n\geq 3,
\end{align}
where $n$ is a positive integer. 
Using \eqref{eq:boundary-term}, \eqref{eq:u-omega-leading-term} and \eqref{f-series}, we obtain  
\begin{align}\label{eq:I1}
  I_1 &= -2E \frac{1-\varepsilon_\omega}{1+\varepsilon_\omega}  f(\theta) \cos \theta \nonumber\\
   &= -E\frac{1-\varepsilon_\omega}{1+\varepsilon_\omega} r_0 [\cos((n-1)\theta)+\cos((n+1)\theta)],
\end{align}
and
\begin{align}\label{eq:I2}
  I_2 &= 2E \frac{1-\varepsilon_\omega}{r_0(1+\varepsilon_\omega)} \left(f(\theta) \cos \theta +  \frac{\d f(\theta)}{\d \theta}\sin \theta \right)\nonumber\\
  &= - E\frac{1-\varepsilon_\omega}{1+\varepsilon_\omega} [(n-1)\cos((n-1)\theta)-(n+ 1)\cos((n+1)\theta) ].
\end{align}
Solving equation \eqref{eq:u-omega-first-density} with \eqref{eq:I1} and \eqref{eq:I2} yields
\begin{align}\label{eq:varphi-12}
  \begin{cases}
\ds \varphi_1 = E\frac{1-\varepsilon_\omega}{1+\varepsilon_\omega} r_0 [\cos((n-1)\theta)+\cos((n+1)\theta)], \ms\\
\ds \varphi_2 = \lambda_\omega^{-1} \frac{E}{2} \left(\frac{3-\varepsilon_\omega}{1+\varepsilon_\omega}(n-1)\cos((n-1)\theta)- (n+1)\cos((n+1)\theta)\right) .
  \end{cases}
\end{align}
Substituting \eqref{eq:varphi-12} into \eqref{eq:u-omega-first} and using \eqref{eq:S-circle} and \eqref{eq:D-circle}, we obtain the first-order solution for the linear field
\begin{align*}
 u_\omega^{(1)} =
  \begin{cases}
\ds 2 E\frac{1-\varepsilon_\omega}{(1+\varepsilon_\omega)^2} \frac{r^{n-1}}{r_0^{n-2}} \cos((n-1)\theta), & r<r_0, \\[8pt]
\ds E \left(\left(\frac{1-\varepsilon_\omega}{1+\varepsilon_\omega}\right)^2 \frac{r_0^n}{r^{n-1}} \cos((n-1)\theta)- \frac{1-\varepsilon_\omega}{1+\varepsilon_\omega} \frac{r_0^{n+2}}{r^{n+1}} \cos((n+1)\theta)\right), & r>r_0.
\end{cases}
 \end{align*}

We now compute the first-order solution for the second harmonic field. To compute $I_3$, we need to derive the following equations:
\begin{align*}
f \left(\frac{\partial u_{2\omega}^{(0)}}{\partial \nu}\Big|_{-}- \frac{\partial u_{2\omega}^{(0)}}{\partial \nu}\Big|_{+} \right)
= \left(\chi_{\perp}^s \varepsilon_{\omega}^2 (\varepsilon_{2\omega}-1) + 4\chi^s_{\parallel} \varepsilon_{\omega}\right) \frac{ 8\pi E^2 }{(1+\varepsilon_\omega)^2(1+\varepsilon_{2\omega})} [\cos((n-2)\theta) + \cos((n+2)\theta)]
\end{align*}
and
\begin{align*}
P^{s,(1)}_{\perp} 
&= -2 \chi_{\perp}^s E^2 \left(\frac{\varepsilon_\omega}{1+\varepsilon_\omega}\right)^2 \left(
 2\frac{1-\varepsilon_\omega}{1+\varepsilon_\omega} (n-1) \cos((n-2)\theta) + n  \cos((n-2)\theta) \right)\\
 &\quad - 2 \chi_{\perp}^s E^2 \left(\frac{\varepsilon_\omega}{1+\varepsilon_\omega}\right)^2 \left( 2\frac{1-\varepsilon_\omega}{1+\varepsilon_\omega} (n-1)\cos(n\theta) - n \cos((n+2)\theta) \right).   
\end{align*} 
Combining these expressions, we obtain 
\begin{align}\label{eq:I3-circle}
I_3 &= \frac{ 8\pi E^2}{(1+\varepsilon_\omega)^2}\left[\frac{\varepsilon_{\omega}}{1+\varepsilon_{2\omega}} \left(\chi_{\perp}^s \varepsilon_{\omega} (\varepsilon_{2\omega}-1) + 4\chi^s_{\parallel}\right) - \left(2\frac{1-\varepsilon_\omega}{1+\varepsilon_\omega} (n-1) + n\right) \chi_{\perp}^s \varepsilon_{\omega}^2 \right] \cos((n-2)\theta) \nonumber\\
 &\quad -\frac{ 8\pi E^2 }{(1+\varepsilon_\omega)^2}\left(2\frac{1-\varepsilon_\omega}{1+\varepsilon_\omega} (n-1) \chi_{\perp}^s \varepsilon_{\omega}^2  \right) \cos(n\theta) \nonumber \\
 &\quad +\frac{ 8\pi E^2 }{(1+\varepsilon_\omega)^2}\left[ \frac{\varepsilon_{\omega}}{1+\varepsilon_{2\omega}} \left(\chi_{\perp}^s \varepsilon_{\omega} (\varepsilon_{2\omega}-1) + 4\chi^s_{\parallel}\right) + n \chi_{\perp}^s \varepsilon_{\omega}^2  \right] \cos((n+2)\theta).
\end{align}

We next derive the following expressions needed to compute $I_4$. The terms involving $f$ and $f'$ yield
\begin{align*}
&f \left(\varepsilon_{2\omega} \frac{\partial^2 u_{2\omega}^{(0)}}{\partial \nu^2}\Big|_{-}- \frac{\partial^2 u_{2\omega}^{(0)}}{\partial \nu^2}\Big|_{+} \right)\\
=& \left(-2\chi_{\perp}^s \varepsilon_{\omega}^2 \varepsilon_{2\omega} + \chi^s_{\parallel} \varepsilon_{\omega} \varepsilon_{2\omega} - 3\chi^s_{\parallel}\varepsilon_{\omega} \right) \frac{ 16\pi E^2 }{r_0(1+\varepsilon_\omega)^2(1+\varepsilon_{2\omega})} [ \cos((n-2)\theta) + \cos((n+2)\theta)],
\end{align*}
and
\begin{align*}
&f'\left(\frac{\partial u_{2\omega}^{(0)}}{\partial T}\Big|_{+} - \varepsilon_{2\omega}\frac{\partial u_{2\omega}^{(0)}}{\partial T}\Big|_{-} \right) \\
=& \left(\chi_{\perp}^s \varepsilon_{\omega}^2 \varepsilon_{2\omega} - \chi^s_{\parallel} \varepsilon_{\omega} \varepsilon_{2\omega} + \chi^s_{\parallel}\varepsilon_{\omega} \right) \frac{ 16\pi E^2}{r_0 (1+\varepsilon_\omega)^2(1+\varepsilon_{2\omega})} [ n \cos((n-2)\theta) -n \cos((n+2)\theta) ].
\end{align*}
Additionally, the expansion of the surface and bulk nonlinear polarizations defined by \eqref{eq:sigma-s-expansion} contribute
\begin{align*}
  \sigma^{s,(1)}
  &=-4 \chi_{\parallel}^s E^2 \frac{\varepsilon_\omega}{r_0 (1+\varepsilon_\omega)^2}  \Big(\cos((n-2)\theta)+\cos((n+2)\theta)+2\Big( \frac{1-\varepsilon_\omega}{1+\varepsilon_\omega}\Big) (n-1)n \cos(n\theta) \\
  &\quad -  n(n-2) \cos((n-2)\theta) - n(n+2) \cos((n+2)\theta) \Big).
\end{align*}
From these expressions, it follows that 
\begin{align}\label{eq:I4-circle}
  I_4 
 &= \frac{ 16\pi E^2 }{r_0(1+\varepsilon_\omega)^2}\left[\frac{\varepsilon_{\omega}}{1+\varepsilon_{2\omega}} \Big( \chi_{\perp}^s \varepsilon_{\omega} \varepsilon_{2\omega} - \chi^s_{\parallel}\varepsilon_{2\omega} + \chi^s_{\parallel}\Big) - n\chi_{\parallel}^s \varepsilon_{\omega}  \right] (n-2)\cos((n-2)\theta) \nonumber\\
 &\quad +\frac{ 16\pi E^2 }{r_0(1+\varepsilon_\omega)^2}\left(2\frac{1-\varepsilon_\omega}{1+\varepsilon_\omega} (n-1) \chi_{\parallel}^s \varepsilon_{\omega} \right) n \cos(n\theta) \nonumber\\
 &\quad -\frac{ 16\pi E^2 }{r_0(1+\varepsilon_\omega)^2}\left[\frac{\varepsilon_{\omega}}{1+\varepsilon_{2\omega}} \Big( \chi_{\perp}^s \varepsilon_{\omega} \varepsilon_{2\omega} - \chi^s_{\parallel}\varepsilon_{2\omega} + \chi^s_{\parallel}\Big) + n\chi_{\parallel}^s\varepsilon_{\omega} \right](n+2)\cos((n+2)\theta).
\end{align}
 
If $\phi_1$ and $\psi_1 $ are given by 
\begin{align*}
\left[
  \begin{array}{ccc}
    \phi_{1} \ms \\
    \psi_{1}
  \end{array}
\right]
=
\sum_{m\in\{-2,0,2\}}
\left[
  \begin{array}{cc}
    \phi_{1,n+p} \ms \\
    \phi_{1,n+p}
  \end{array}
\right] \cos((n+p)\theta),
\end{align*}
from \eqref{eq:u2-omega-first-density} we obtain
\begin{align}\label{eq:u2-omega-first-density-circle}
\begin{cases}
\ds \phi_1 = -I_3, \ms \\[4pt]
\ds \psi_1 = \lambda_{2\omega}^{-1} \left(\frac{I_4}{\varepsilon_{2\omega}-1} + \sum_{p\in\{-2,0,2\}} \frac{n+p}{2r_0} \phi_{1,n+p}\cos((n+p)\theta) \right).
\end{cases} 
\end{align}
Substituting \eqref{eq:I3-circle}, \eqref{eq:I4-circle} and \eqref{eq:u2-omega-first-density-circle} into the integral expression \eqref{eq:u2-omega-first} and using \eqref{eq:S-circle} and \eqref{eq:D-circle}, we obtain the first-order solution for the second harmonic field:
\begin{align}\label{eq:u2omega-1}
  u_{2\omega}^{(1)} = M_{n-2} \frac{1}{r^{n-2}}\cos((n-2)\theta) + M_{n} \frac{1}{r^{n}}\cos(n\theta) + M_{n+2} \frac{1}{r^{n+2}}\cos((n+2)\theta), \quad n\geq 3,
\end{align}
where
\begin{align}
M_{n-2} &=  \frac{8\pi E^2 }{(1+\varepsilon_{2\omega})(1+\varepsilon_\omega)^2}\bigg(\frac{\varepsilon_{2\omega}-3}{1+\varepsilon_{2\omega}}\chi_{\perp}^s\varepsilon^2_{\omega}\varepsilon_{2\omega} + \frac{(n-2)\varepsilon_\omega -(3n -2)}{1+\varepsilon_\omega}\chi_{\perp}^s\varepsilon^2_{\omega}\varepsilon_{2\omega} \nonumber  \\
  &\quad + 2\frac{(n-1)+(n+3)\varepsilon_{2\omega}}{1+\varepsilon_{2\omega}}\chi_{\parallel}^s \varepsilon_{\omega} \bigg)r_0^{n-2}, \label{eq:Mn-2}  \\
M_{n}&=\frac{ 16\pi E^2 }{(1+\varepsilon_{2\omega})(1+\varepsilon_\omega)^2}\left(\frac{\varepsilon_{\omega}-1}{1+\varepsilon_{\omega}}(n-1)\big(\chi_{\perp}^s\varepsilon^2_{\omega}\varepsilon_{2\omega}+2 \chi_{\parallel}^s \varepsilon_{\omega}\big)\right) r_0^{n}, \nonumber \\
M_{n+2} &= \frac{8\pi E^2 }{(1+\varepsilon_{2\omega})(1+\varepsilon_\omega)^2}(n+1) \left(\chi_{\perp}^s\varepsilon^2_{\omega}\varepsilon_{2\omega}+2 \chi_{\parallel}^s \varepsilon_{\omega}\right)r_0^{n+2} .\nonumber
\end{align}
From \eqref{eq:u2omega-1}, we observe that the dipole radiation occurs when $n=3$.

This completes the proof.
\end{proof}

\begin{thm}\label{thm:mainthm3}
Let the same assumptions as in Theorem \ref{thm:mainthm1} hold. The following three types of plasmon resonance occur in the second harmonic field:
\begin{itemize}
  \item If $\varepsilon_\omega = -1$ and $\varepsilon_{2\omega} \neq -1$, then the resonance order is $O\left((1+\varepsilon_\omega)^{-3}\right)$. 
  \item If $\varepsilon_\omega \neq -1$ and $\varepsilon_{2\omega} = -1$, then the resonance order is $O\left((1+\varepsilon_{2\omega})^{-2}\right)$. 
  \item If $\varepsilon_\omega = -1$ and $\varepsilon_{2\omega} = -1$, then the resonance order is either
   $O\left((1+\varepsilon_\omega)^{-2}(1+\varepsilon_{2\omega})^{-2}\right)$ or \\ $O\left((1+\varepsilon_\omega)^{-3}(1+\varepsilon_{2\omega})^{-1}\right)$.  
\end{itemize}
\end{thm}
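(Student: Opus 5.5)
The plan is to read the resonance orders off the explicit solutions already obtained in the proofs of Theorems \ref{thm:mainthm1} and \ref{thm:mainthm2}. By Theorem \ref{thm-expansion}, the second harmonic field is $u_{2\omega}^{\epsilon}=u_{2\omega}^{(0)}+\epsilon u_{2\omega}^{(1)}+O(\epsilon^2)$. The resonance order of the field will be taken as the strongest singularity, in the two small parameters $\delta_1:=1+\varepsilon_\omega$ and $\delta_2:=1+\varepsilon_{2\omega}$, among the coefficients of $u_{2\omega}^{(0)}$ in \eqref{eq:u2-omega-leading-circle} and of $u_{2\omega}^{(1)}$ in \eqref{eq:u2omega-1}, namely $M_{n-2}$, $M_n$ and $M_{n+2}$. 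The statement is interpreted in the usual plasmonic sense: $\varepsilon_\omega\to-1$ (respectively $\varepsilon_{2\omega}\to-1$) means $\lambda_\omega\to 0$ (respectively $\lambda_{2\omega}\to0$) in \eqref{eq:lambda-omega} and \eqref{eq:lambda-2omega}. So I will track powers of $\delta_1,\delta_2$ while treating $\chi^s_\perp$, $\chi^s_\parallel$, $E$, $r_0$ and $n$ as fixed and generic, that is, with no accidental cancellation of leading coefficients.

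\emph{First step.} Expand the leading-order field. In both the interior and the exterior formula of \eqref{eq:u2-omega-leading-circle}, the prefactor is $\delta_1^{-2}\delta_2^{-1}$, and the numerators $\chi_\perp^s\varepsilon_\omega^2\varepsilon_{2\omega}+2\chi^s_\parallel\varepsilon_\omega$ stay bounded and generically nonzero at $\varepsilon_\omega=\varepsilon_{2\omega}=-1$. The constant interior term carries only $\delta_1^{-2}$. Hence $u_{2\omega}^{(0)}=O(\delta_1^{-2}\delta_2^{-1})$.

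\emph{Second step.} Expand the first-order coefficients.
\begin{itemize}
\item $M_{n+2}$ carries the prefactor $\delta_1^{-2}\delta_2^{-1}$.
\item $M_n$ has an extra factor $(\varepsilon_\omega-1)/(1+\varepsilon_\omega)$, so it is $O(\delta_1^{-3}\delta_2^{-1})$.
\item $M_{n-2}$ contains the bracketed terms $\frac{\varepsilon_{2\omega}-3}{1+\varepsilon_{2\omega}}$ and $\frac{(n-1)+(n+3)\varepsilon_{2\omega}}{1+\varepsilon_{2\omega}}$, giving $O(\delta_1^{-2}\delta_2^{-2})$. It also contains $\frac{(n-2)\varepsilon_\omega-(3n-2)}{1+\varepsilon_\omega}$, giving $O(\delta_1^{-3}\delta_2^{-1})$.
\end{itemize}
I will check that the numerators evaluated at $-1$ do not vanish identically. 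At $\varepsilon_{2\omega}=-1$, $\varepsilon_{2\omega}-3=-4$ and $(n-1)-(n+3)=-4$. At $\varepsilon_\omega=-1$, $-(n-2)-(3n-2)=-4n$. None of these is zero.

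\emph{Third step.} Collect the three cases.
\begin{itemize}
\item If only $\delta_1\to0$, the dominant terms are $M_n$ and part of $M_{n-2}$, which give $\delta_1^{-3}$.
\item If only $\delta_2\to0$, the dominant term is the $\delta_2^{-2}$ part of $M_{n-2}$.
\item If both tend to zero, two incomparable monomials survive, $\delta_1^{-2}\delta_2^{-2}$ and $\delta_1^{-3}\delta_2^{-1}$. Which one dominates depends on the relative rate at which $\delta_1$ and $\delta_2$ vanish, which explains the ``either/or'' in the statement.
\end{itemize}

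The main obstacle is making sure that no hidden higher-order singularity was lost in the simplified forms of $M_{n-2},M_n,M_{n+2}$. Powers of $\lambda_{2\omega}^{-1}\propto \delta_2$ and $\lambda_\omega^{-1}\propto\delta_1$ partly cancel the $\delta^{-1}$ produced elsewhere, and the final coefficients were obtained after such cancellations. To control this, I will retrace the chain \eqref{eq:varphi-12} $\to$ \eqref{eq:I3-circle}, \eqref{eq:I4-circle} $\to$ \eqref{eq:u2-omega-first-density-circle} and count the singular factors at each stage. Concretely:
\begin{itemize}
\item $u_\omega^{(1)}$ carries at most $\delta_1^{-2}$.
\item Since $I_3$ and $I_4$ are quadratic in the linear field with one first-order factor, they carry at most $\delta_1^{-3}$ and $\delta_1^{-2}\delta_2^{-1}$.
\item Inverting $\lambda_{2\omega}\mathcal I-\mathcal K^*_\Omega$ on the nonzero Fourier modes, where $\mathcal K^*_\Omega$ vanishes by \eqref{eq:K}, together with the exterior evaluations \eqref{eq:S-circle} and \eqref{eq:D-circle}, contributes at most one further factor $\delta_2^{-1}$.
\end{itemize}
This confirms the stated orders as upper bounds, and the nonvanishing numerators computed in the second step show they are attained.
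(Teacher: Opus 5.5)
Your proposal is correct and is the same argument as the paper's, whose entire proof is to read the singular orders off \eqref{eq:u2-omega-leading-circle}, \eqref{eq:u2omega-1} and \eqref{eq:Mn-2}; your nonvanishing checks and your count of singular factors along the chain of densities are extra rigor that the paper omits. There are two harmless slips: $\lambda_{2\omega}^{-1}=2(\varepsilon_{2\omega}-1)/(\varepsilon_{2\omega}+1)$ is proportional to $\delta_2^{-1}$, not $\delta_2$ (and likewise $\lambda_\omega^{-1}\propto\delta_1^{-1}$), and $(n-2)\varepsilon_\omega-(3n-2)$ at $\varepsilon_\omega=-1$ equals $-4(n-1)$ rather than $-4n$, which is still nonzero for $n\geq 3$, so neither affects your conclusions.
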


\begin{proof}
 The proof of Theorem \ref{thm:mainthm3} is straightforward by combining equations \eqref{eq:u2-omega-leading-circle}, \eqref{eq:u2omega-1} and \eqref{eq:Mn-2}.
\end{proof}

\begin{rem}
From Theorem \ref{thm:mainthm2} and Theorem \ref{thm:mainthm3}, it can be seen that under plasmonic resonance and symmetry breaking, not only is the intensity of the second harmonic field enhanced, but its propagation distance is also increased. Moreover, the resonance enhancement of second harmonic generation is strongest when resonance occurs simultaneously in the linear and second harmonic electric fields, noting that resonance occurs in the linear electric field when $\varepsilon_\omega =-1$. Conversely, if the nonlinear-material boundary possesses a higher-order degree of symmetry (i.e., it remains invariant under a higher-order dihedral group), then the second-harmonic field may include higher-order multipole radiation modes. However, these modes are characterized by a reduced radiative intensity, a faster decay rate, and, consequently, a shorter effective propagation distance. In practical applications, the limited propagation length of multipole radiation modes makes them difficult to detect experimentally.
\end{rem}

\section{Nonlinear harmonic generation via symmetry perturbations in the non-uniform background field}\label{sec:proof6}
In this section, we investigate the effects of inhomogeneous background fields on nonlinear optical responses. By introducing the concept of relative symmetry to quantify the degree of symmetry breaking in the background field and the geometry, we have demonstrated that dipole radiation occurs when the degree of symmetry breaking is minimal.

\subsection{Symmetry perturbations of non-uniform background field}
Assume the boundary is a circle. We first consider that the background field involves single homogeneous real harmonic term, that is, \[
H(x_1, x_2) =   C_\ell \cdot \operatorname{Re}(z^\ell)=  C_\ell r^\ell \cos(\ell\theta),
\]
where $z= x_1+ \i x_2$ is a complex variable. 
To preserve the physical meaning, we set $C_\ell  =- E$ such that $H(x) = - E r^\ell \cos(\ell\theta), \ell \geq 2$.  Following a calculation process similar to the proof of Theorem \ref{thm:mainthm1}, we have
\begin{align}\label{eq:u-omega-leading-term}
 u_\omega =
  \begin{cases}
\ds -E\frac{2}{1+\varepsilon_\omega} r^\ell \cos(\ell\theta), &\quad r<r_0,
\vspace{1em}\\
\ds -E\Big(r^\ell + \frac{1-\varepsilon_\omega}{1+\varepsilon_\omega} \frac{r_0^{2\ell}}{r^\ell}\Big) \cos(\ell\theta), &\quad r>r_0.
\end{cases}
 \end{align}
By straightforward calculation, it follows that
\begin{align*}
P^{s}_{\perp}
= \chi_{\perp}^s \left(\varepsilon_\omega\frac{\p u_\omega}{\p \nu}\Big|_{-}\right)^2 
= 2\chi_{\perp}^s E^2  \left(\frac{\varepsilon_\omega}{1+\varepsilon_\omega}\right)^2 \ell^2 r_0^{2(\ell-1)} (1+\cos(2\ell\theta))
\end{align*} 
and
\begin{align*}
\sigma^{s} &= -\frac{1}{r_0} \frac{\d P^{s}_{\parallel}}{\d \theta}
= 8 \chi^s_{\parallel} E^2  \frac{\varepsilon_\omega}{(1+\varepsilon_\omega)^2} \ell^3 r_0^{2\ell-3}  \cos(2 \ell \theta) .
\end{align*}
Hence, the second harmonic field is given by
\begin{align}\label{eq:u2-omega-Inhomogeneous}
  u_{2\omega} =\begin{cases}
                      \ds  - \chi_{\perp}^s \varepsilon_\omega^2  \frac{8\pi E^2 \ell^2}{(1+\varepsilon_\omega)^2}   r_0^{2(\ell-1)} - \left(\chi_{\perp}^s \varepsilon_{\omega}^2 -2\chi^s_{\parallel} \varepsilon_{\omega}\right) \frac{ 8\pi E^2 \ell^2}{(1+\varepsilon_\omega)^2(1+\varepsilon_{2\omega})}\frac{r^{2\ell}}{r_0^2}\cos(2 \ell \theta), & r<r_0, \ms \\
                      \ds  \left(\chi_{\perp}^s \varepsilon_{\omega}^2 \varepsilon_{2\omega} +2\chi^s_{\parallel}\varepsilon_{\omega}\right) \frac{ 8\pi E^2 \ell^2 }{(1+\varepsilon_\omega)^2(1+\varepsilon_{2\omega})}\frac{r_0^{4\ell-2}}{r^{2\ell}}\cos(2 \ell \theta), & r>r_0.
                     \end{cases}
\end{align}

From \eqref{eq:u2-omega-Inhomogeneous}, it can be seen that the dipole radiation is forbidden for the non-uniform background field $H(x) = - E r^\ell \cos(\ell\theta), \ell \geq 2$. To excite dipolar radiation passing through a non-uniform background field, we next construct the background field involving 
two homogeneous real harmonic terms, that is, \[
H(x_1, x_2) =  C_m \cdot \operatorname{Re}(z^m) + C_\ell \cdot \operatorname{Re}(z^\ell)= C_m r^m \cos(m \theta) + C_\ell r^\ell \cos(\ell\theta).
\]
Since two distinct non-uniform fields, or perturbations of the original non-uniform background field, are involved here, we introduce the following definition of relative symmetry degree.
\begin{defn}\label{resyde1}
The relative symmetry degree is defined as
\[
d_{\text{rel}} = |m - \ell| \geq 1, \quad (m \neq \ell).
\]
\end{defn}
According to Theorem \ref{thm:q-order}, we know that when the $\ell$-homogeneous real harmonic parts of \(H\) is invariant under the dihedral group \(D_q\), we have \(q \mid \ell\); hence \(q_{\text{max}} = \ell\). Thus, fixing \(\ell\), we discuss the relative symmetry degree of the freely varying \(m\) with respect to \(\ell\). Clearly, the larger \(m\) is, the more dihedral groups leave it invariant, hence the more symmetric it is. Therefore, the most asymmetric case is \(d_{\text{rel}} = 1\), i.e., \(m = \ell+1\) or \(\ell-1\). Therefore, we can obtain the following theorem.  
\begin{thm}
  Let $H = -E(r^m \cos(m \theta) + r^\ell \cos(\ell\theta))$ with $\ell \geq2$ and $m \geq1$. The relative symmetry degree satisfies $d_{\text{rel}} =1$, the dipole radiation occurs. Furthermore,  If $d_{\text{rel}} =n$, the $2^n$-multipole radiation occurs. Additionally, the resonant behavior of both dipole and multipole radiation is described as follows:
  \begin{itemize}
  \item If $\varepsilon_\omega = -1$ and $\varepsilon_{2\omega} \neq -1$, then the resonance order is $O\left((1+\varepsilon_\omega)^{-2}\right)$. 
  \item If $\varepsilon_\omega \neq -1$ and $\varepsilon_{2\omega} =-1$, then the resonance order is $O\left((1+\varepsilon_{2\omega})^{-1}\right)$. 
  \item If $\varepsilon_\omega =-1$ and $\varepsilon_{2\omega} =-1$, then the resonance order is $O\left((1+\varepsilon_\omega)^{-2}(1+\varepsilon_{2\omega})^{-1}\right)$.  
\end{itemize}
\end{thm}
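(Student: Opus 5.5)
Since the boundary is the circle $|x|=r_0$ (here $\epsilon=0$), I would work with Fourier modes and the explicit formulas \eqref{eq:S-circle}, \eqref{eq:K}, \eqref{eq:D-circle}, \eqref{eq:D-circle-1}, as in the proof of Theorem \ref{thm:mainthm1}. Everything is linear in the background field, so I would first solve for $u_\omega$ mode by mode. From \eqref{eq:u1-density} and \eqref{eq:K}, each mode $k\in\{m,\ell\}$ gives density $\varphi^{(k)}=-\lambda_\omega^{-1}E k r_0^{k-1}\cos(k\theta)$. This yields the superposition of the single-mode solution \eqref{eq:u-omega-leading-term}:
\begin{align*}
u_\omega=-\frac{2E}{1+\varepsilon_\omega}\left(r^m\cos(m\theta)+r^\ell\cos(\ell\theta)\right),\qquad r<r_0 ,
\end{align*}
with the analogous exterior form. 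Hence, on $r=r_0$, $\partial_\nu u_\omega|_-$ and $\partial_T u_\omega|_-$ are linear combinations of $\cos(k\theta)$ and $\sin(k\theta)$ for $k=m,\ell$, all carrying the common prefactor $(1+\varepsilon_\omega)^{-1}$.

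The next step is to compute the nonlinear sources. Because $P^s_\perp$ and $P^s_\parallel$ are quadratic in $\bm F$, squaring and using product-to-sum formulas gives Fourier content in the frequencies $\{0,2m,2\ell\}$ from the pure terms and $\{|m-\ell|,\,m+\ell\}$ from the cross terms. Concretely,
\begin{align*}
P^s_\perp\propto \chi^s_\perp\frac{\varepsilon_\omega^2E^2}{(1+\varepsilon_\omega)^2}\left(m r_0^{m-1}\cos(m\theta)+\ell r_0^{\ell-1}\cos(\ell\theta)\right)^2,
\end{align*}
and $P^s_\parallel\propto \chi^s_\parallel \varepsilon_\omega E^2(1+\varepsilon_\omega)^{-2}(\cdots)(\cdots)$, from which $\sigma^s=-r_0^{-1}\,\mathrm{d}P^s_\parallel/\mathrm{d}\theta$ follows. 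The important point is that the cross term $2m\ell\, r_0^{m+\ell-2}\cos(m\theta)\cos(\ell\theta)$ contains $\cos((m-\ell)\theta)$ with a coefficient that is generically nonzero.

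I would then solve \eqref{eq:u2-density} mode by mode. For each frequency $j\ge1$, the jump density is $\phi_j=-4\pi P^s_{\perp,j}$. By \eqref{eq:D-circle}, $\frac{\partial\mathcal D_\Omega}{\partial\nu}[\cos(j\theta)]=-\frac{j}{2r_0}\cos(j\theta)$, so
\begin{align*}
\psi_j=\lambda_{2\omega}^{-1}\left(-\frac{4\pi\sigma^s_j}{\varepsilon_{2\omega}-1}-\frac{j}{2r_0}\phi_j\right).
\end{align*}
The $j=0$ mode produces only the interior constant through \eqref{eq:D-circle-1}. For $r>r_0$, each mode with $j\ge 1$ contributes a term $M_j r^{-j}\cos(j\theta)$. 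The exterior field is therefore a finite sum over $j\in\{|m-\ell|,\,m+\ell,\,2m,\,2\ell\}$, and its slowest decay is $r^{-|m-\ell|}=r^{-d_{\rm rel}}$. When $d_{\rm rel}=1$ this gives dipole radiation, and when $d_{\rm rel}=n$ it gives $2^n$-multipole radiation in the sense of the definition. I would also note that $|m-\ell|<\min(2m,2\ell,m+\ell)$ whenever $m,\ell\ge1$, so nothing decays more slowly than the cross-term mode.

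The main obstacle is to show that $M_{|m-\ell|}\neq0$, that is, that the cross-term contributions of $\chi^s_\perp$ and $\chi^s_\parallel$ do not cancel. I would compute the coefficient explicitly; following the pattern of \eqref{eq:u2-omega-leading-circle}, I expect it to be
\begin{align*}
M_{|m-\ell|}=c_{m,\ell}\,\frac{8\pi E^2}{(1+\varepsilon_\omega)^2(1+\varepsilon_{2\omega})}\left(\chi^s_\perp\varepsilon_\omega^2\varepsilon_{2\omega}\,a_{m,\ell}+\chi^s_\parallel\varepsilon_\omega\,b_{m,\ell}\right)r_0^{\,m+\ell+|m-\ell|-2},
\end{align*}
with $a_{m,\ell}$, $b_{m,\ell}$ explicit and $c_{m,\ell}\neq 0$. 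I would then argue that it is nonzero for generic susceptibilities. The same formula gives the resonance orders directly. The factor $(1+\varepsilon_\omega)^{-2}$ comes from $\bm F$ being linear in $(1+\varepsilon_\omega)^{-1}$ and the sources being quadratic in $\bm F$. The factor $(1+\varepsilon_{2\omega})^{-1}$ comes from $\lambda_{2\omega}^{-1}=2(\varepsilon_{2\omega}-1)/(\varepsilon_{2\omega}+1)$, while the double-layer part contributes no singular factor. Reading off these three cases gives the three bullet points of the theorem.
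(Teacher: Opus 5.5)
Your route is the paper's: superpose the single-mode fundamental fields, expand $P^s_\perp$ and $\sigma^s$ in Fourier modes on $r=r_0$, solve \eqref{eq:u2-density} mode by mode, and read off the slowest exterior mode and the prefactor $(1+\varepsilon_\omega)^{-2}(1+\varepsilon_{2\omega})^{-1}$. For $d_{\text{rel}}=1$ and for the three resonance bullets this works.

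\textbf{The failing step.} Your inequality $|m-\ell|<\min(2m,2\ell,m+\ell)$ is false. One always has $|m-\ell|<m+\ell$, but $|m-\ell|<2\min(m,\ell)$ holds exactly when $\max(m,\ell)<3\min(m,\ell)$. For instance, $m=1$, $\ell=4$ gives $d_{\text{rel}}=3$. Yet the $\cos(2\theta)$ mode coming from the square of the $m$-term has exterior coefficient proportional to $\varepsilon_\omega(\chi^s_\perp\varepsilon_\omega\varepsilon_{2\omega}+2\chi^s_\parallel)$ and decays only like $r^{-2}$, so $u_{2\omega}$ is not $O(r^{-3})$. Thus ``the slowest decay is $r^{-d_{\text{rel}}}$'' fails whenever $d_{\text{rel}}>2\min(m,\ell)$, unless $\chi^s_\perp\varepsilon_\omega\varepsilon_{2\omega}+2\chi^s_\parallel=0$. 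In that range the $2^n$-multipole claim, read through the paper's definition as $u_{2\omega}=O(|x|^{-n})$, cannot be proved.

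What \eqref{eq:u2-omega-Inhomogeneous-v2} actually gives, and all the paper's proof reads off it, is that a nonvanishing $r^{-d_{\text{rel}}}\cos(d_{\text{rel}}\theta)$ component is present. To conclude $u_{2\omega}=O(r^{-n})$ you must add the hypothesis $\max(m,\ell)\le 3\min(m,\ell)$. The dipole case is unaffected, because every other mode has frequency at least $2$.

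\textbf{Two smaller corrections.} First, from \eqref{eq:D-circle} both one-sided radial derivatives at $r_0$ give $\frac{\partial\mathcal D_\Omega}{\partial\nu}[\cos(j\theta)]=+\frac{j}{2r_0}\cos(j\theta)$. Hence $\psi_j=\lambda_{2\omega}^{-1}\bigl(-\frac{4\pi\sigma^s_j}{\varepsilon_{2\omega}-1}+\frac{j}{2r_0}\phi_j\bigr)$, which is the sign the paper uses in \eqref{eq:u2-omega-first-density-circle}. With your sign, the $\chi^s_\perp$ part of each exterior coefficient would come out as $-\phi_j/(1+\varepsilon_{2\omega})$ rather than $-\phi_j\varepsilon_{2\omega}/(1+\varepsilon_{2\omega})$.

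Second, your ``main obstacle'' needs no genericity argument. Since $\cos(m\theta)\sin(\ell\theta)+\sin(m\theta)\cos(\ell\theta)=\sin((m+\ell)\theta)$, the product $\partial_\nu u_\omega\,\partial_T u_\omega$, and hence $\sigma^s$, has no $|m-\ell|$ mode, so your $b_{m,\ell}=0$. With $\phi_{|m-\ell|}=-16\pi\chi^s_\perp E^2 m\ell\, r_0^{m+\ell-2}\varepsilon_\omega^2/(1+\varepsilon_\omega)^2$ and $\sigma^s_{|m-\ell|}=0$, you get exactly the paper's coefficient $\chi^s_\perp\varepsilon_\omega^2\varepsilon_{2\omega}\frac{16\pi E^2m\ell}{(1+\varepsilon_\omega)^2(1+\varepsilon_{2\omega})}r_0^{|m-\ell|+m+\ell-2}$. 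This is nonzero if and only if $\chi^s_\perp\varepsilon_\omega\varepsilon_{2\omega}\neq0$, so only the normal susceptibility drives the dipole. The same explicit formula displays the three resonance orders directly.
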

\begin{proof}

 Given this background field, our calculations yield
\begin{align}\label{eq:u-omega-Inhomogeneous}
 u_\omega =
  \begin{cases}
\ds -E\frac{2}{1+\varepsilon_\omega} \Big( r^{m} \cos(m\theta) +r^\ell \cos(\ell\theta)  \Big), &\quad r<r_0,
\ms\\
\ds -E \left(\left(r^{m} + \frac{1-\varepsilon_\omega}{1+\varepsilon_\omega} \frac{r_0^{2m}}{r^{m}}\right) \cos(m\theta) +  \left( r^\ell + \frac{1-\varepsilon_\omega}{1+\varepsilon_\omega} \frac{r_0^{2\ell}}{r^\ell}\right) \cos(\ell\theta)  \right), &\quad r>r_0.
\end{cases}
\end{align}
By \eqref{eq:u-omega-Inhomogeneous}, we derive
\begin{align*}
P^s_{\perp} 
  &= 2\chi_{\perp}^s E^2  \left(\frac{\varepsilon_\omega}{1+\varepsilon_\omega}\right)^2 \Big(m^2 r_0^{2(m-1)} (1+\cos(2m\theta)) + \ell^2 r_0^{2(\ell-1)} (1+\cos(2\ell\theta)) \\
  &\quad + 2m\ell r_0^{m+\ell-2}(\cos((m-\ell)\theta) + \cos(m+\ell)\theta)\Big)
\end{align*}
and 
\begin{align*}
  \sigma^s 
= 8 \chi^s_{\parallel} E^2  \frac{\varepsilon_\omega}{(1+\varepsilon_\omega)^2} \left(m^3 r_0^{2m-3}  \cos(2 m \theta) + \ell^3 r_0^{2\ell-3}  \cos(2 \ell \theta) + m \ell (m+\ell) r_0^{m+\ell-3}  \cos((m+\ell) \theta) \right).
\end{align*}
Therefore, using \eqref{eq:sol-u2} and \eqref{eq:u2-density}, we can obtain
\begin{align}\label{eq:u2-omega-Inhomogeneous-v2}
  u_{2\omega} &=  \left(\chi_{\perp}^s \varepsilon_{\omega}^2 \varepsilon_{2\omega} +2\chi^s_{\parallel}\varepsilon_{\omega}\right) \frac{ 8\pi E^2 }{(1+\varepsilon_\omega)^2(1+\varepsilon_{2\omega})} \Big(m^2\frac{ r_0^{4m-2}}{r^{2m}}\cos(2 m \theta) + \ell^2\frac{r_0^{4\ell-2}}{r^{2\ell}}\cos(2 \ell \theta) \nonumber\\
    &\quad  + 2m\ell \frac{r_0^{2(m+\ell-1)}}{r^{m+\ell}}\cos((m+\ell)\theta)\Big)\nonumber \\
    &\quad + \chi_{\perp}^s \varepsilon_{\omega}^2 \varepsilon_{2\omega} \frac{ 16\pi E^2 m\ell }{(1+\varepsilon_\omega)^2(1+\varepsilon_{2\omega})}  \frac{r_0^{|m-\ell|+m+\ell-2}}{r^{|m-\ell|}}\cos((m-\ell)\theta), \quad r>r_0.
\end{align}
As can be seen from \eqref{eq:u2-omega-Inhomogeneous-v2}, dipole radiation occurs when $|m-\ell| = 1$. 

This completes the proof.
\end{proof}

\subsection{Symmetry perturbations of geometric perturbations in a non-uniform background field}\label{sec:Sb-non-uniform}
In this subsection, we consider perturbing the geometry to excite dipole radiation in a non-uniform background field $H(x) = - E r^\ell \cos(\ell\theta), \ell \geq 2$. Assume that \(r_s = r_0 + \epsilon f(\theta) \). Here
we still write $f$ as
\begin{align}\label{f-series}
f(\theta)=r_0\cos(n\theta), \quad n\geq 3.
\end{align}

Fix \(\ell\) and consider the relative position \(2\ell\), because \(D_\ell\) is an invariant subgroup of \(D_{2\ell}\), i.e., \(D_\ell\) is invariant under \(D_{2\ell}\): for every \(a \in D_{2\ell}\), we have \(a \cdot D_\ell = D_\ell \cdot a\). Then we can extend definition \ref{resyde1} for relative symmetry degree to the following definition.
\begin{defn}
The relative symmetry degree is defined as
\[
d_{\text{rel}} = |n - 2\ell| \geq 1, \quad n \neq 2\ell.
\]
\end{defn}

When \(n = 2\ell\), \(D_{\ell}\) is an invariant subgroup of \(D_{n}\) (trivial), so the minimal symmetry degree (most asymmetric) is also \(d_{\text{rel}} = 1\), i.e., \(n = 2\ell+1\) or \(2\ell-1\). Therefore, we can obtain the following theorem, which can be regarded as an extension of Theorem \ref{thm:mainthm2}. 
\begin{thm}
Let $H(x) = - E r^\ell \cos(\ell\theta), \ell \geq 2$ and $f(\theta)=r_0\cos(n\theta)$. The relative symmetry degree satisfies $d_{\text{rel}} =1$, the dipole radiation occurs. Furthermore,  If $d_{\text{rel}} =m$, the $2^m$-multipole radiation occurs. In addition, dipole and multipole radiation exhibit the following resonant behavior:
\begin{itemize}
  \item If $\varepsilon_\omega = -1$ and $\varepsilon_{2\omega} \neq -1$, then the resonance order is $O\left((1+\varepsilon_\omega)^{-3}\right)$. 
  \item If $\varepsilon_\omega \neq -1$ and $\varepsilon_{2\omega} =-1$, then the resonance order is $O\left((1+\varepsilon_{2\omega})^{-2}\right)$. 
  \item If $\varepsilon_\omega =-1$ and $\varepsilon_{2\omega} =-1$, then the resonance order is either $O\left((1+\varepsilon_\omega)^{-2}(1+\varepsilon_{2\omega})^{-2}\right)$ or \\ $O\left((1+\varepsilon_\omega)^{-3}(1+\varepsilon_{2\omega})^{-1}\right)$.  
\end{itemize}
\end{thm}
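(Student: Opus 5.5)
We follow the proof of Theorem \ref{thm:mainthm2}, replacing the uniform field by $H(x)=-Er^\ell\cos(\ell\theta)$ and tracking Fourier modes. The key point is that every quantity in the first-order coupled system \eqref{eq:first-order-equation} is a finite trigonometric polynomial on $\{r=r_0\}$. By \eqref{eq:K} and \eqref{eq:S-circle}--\eqref{eq:D-circle-1}, each mode $\cos(k\theta)$ with $k\ge1$ in the densities produces an exterior term proportional to $r^{-k}\cos(k\theta)$. So the decay rate of $u_{2\omega}^{(1)}$ is fixed by the smallest nonzero frequency that carries a nonvanishing coefficient.

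\textbf{Step 1 (leading order).} The leading-order fields are already computed: $u_\omega^{(0)}$ is given by \eqref{eq:u-omega-leading-term} in its $\ell$-version, and $u_{2\omega}^{(0)}$ by \eqref{eq:u2-omega-Inhomogeneous}. Hence $u_\omega^{(0)}$ carries only mode $\ell$. The function $u_{2\omega}^{(0)}$ carries mode $2\ell$, plus a constant inside the disk.

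\textbf{Step 2 (first-order linear field).} Insert $f=r_0\cos(n\theta)$ into $I_1$ and $I_2$ of \eqref{eq:boundary-term}. Since $f$ multiplies mode $\ell$ data, $I_1$ and $I_2$ contain only modes $n\pm\ell$. Solve \eqref{eq:u-omega-first-density} mode by mode, as in \eqref{eq:varphi-12}. This gives $u_\omega^{(1)}$ with modes $n\pm\ell$ and coefficients of order $(1+\varepsilon_\omega)^{-2}$, one factor from $u_\omega^{(0)}$ and one from $\lambda_\omega^{-1}$.

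\textbf{Step 3 (first-order second-harmonic sources).} Compute $I_3$ and $I_4$ from the three types of contribution.
\begin{itemize}
\item The $f$- and $f'$-terms acting on $u_{2\omega}^{(0)}$ produce modes $n\pm2\ell$.
\item $P^{s,(1)}_\perp$ in \eqref{eq:Ps-per-expansion} is a product of mode $\ell$ with the bracket, which contains modes $n\pm\ell$ (from $u_\omega^{(1)}$, and from $f$ times derivatives of $u_\omega^{(0)}$). It therefore produces modes $|n-2\ell|$, $n$ and $n+2\ell$.
\item $\sigma^{s,(1)}$ in \eqref{eq:sigma-s-expansion} has the same mode content as $P^{s,(1)}_\perp$, after $\theta$-differentiation and the curvature correction $\kappa=1/r_0$.
\end{itemize}
Then solve \eqref{eq:u2-omega-first-density} as in \eqref{eq:u2-omega-first-density-circle}. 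This yields, for $r>r_0$,
\[
u_{2\omega}^{(1)}=M_{|n-2\ell|}\,r^{-|n-2\ell|}\cos((n-2\ell)\theta)+M_n\,r^{-n}\cos(n\theta)+M_{n+2\ell}\,r^{-n-2\ell}\cos((n+2\ell)\theta).
\]
The excluded case $n=2\ell$ would give a zero mode, which is removed by the mean-zero normalization. For $d_{\rm rel}=|n-2\ell|=m$ the slowest term decays like $r^{-m}$, and $u_{2\omega}^{(0)}$ decays faster, like $r^{-2\ell}$. Hence we get $2^m$-multipole radiation, and dipole radiation when $m=1$. When $m=1$ the leading term is $\epsilon M_{1}r^{-1}$, so the claim is meant at first order in $\epsilon$.

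\textbf{Step 4 (resonance orders).} Factor out the powers of $(1+\varepsilon_\omega)$ and $(1+\varepsilon_{2\omega})$ in $M_{|n-2\ell|}$.
\begin{itemize}
\item $P^{s,(1)}_\perp$ and $\sigma^{s,(1)}$ are products of an $O((1+\varepsilon_\omega)^{-1})$ factor and an $O((1+\varepsilon_\omega)^{-2})$ factor, which gives $(1+\varepsilon_\omega)^{-3}$.
\item The $f$-terms in $I_3,I_4$ inherit $(1+\varepsilon_\omega)^{-2}(1+\varepsilon_{2\omega})^{-1}$ from $u_{2\omega}^{(0)}$.
\item Inverting $\lambda_{2\omega}$ contributes a further factor $(1+\varepsilon_{2\omega})^{-1}$.
\end{itemize}
Combining these gives the three listed cases, exactly as in Theorem \ref{thm:mainthm3}.

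\textbf{Main obstacle.} The expected difficulty is showing that $M_{|n-2\ell|}$ does not vanish identically. This is delicate when $n<2\ell$. There $\cos((n-2\ell)\theta)=\cos((2\ell-n)\theta)$, and contributions from products $\cos(\ell\theta)\cos((n-\ell)\theta)$ (with $n-\ell$ possibly negative) must be collected carefully, as must the sign of the $f'$-terms. We would first compute the coefficient in closed form, analogous to \eqref{eq:Mn-2}, as a polynomial in $\chi^s_\perp,\chi^s_\parallel$ with rational coefficients in $\varepsilon_\omega,\varepsilon_{2\omega},n,\ell$. We would then exhibit one nonzero term, for instance the $\chi^s_\parallel$ coefficient, so that the statement holds for generic susceptibilities.
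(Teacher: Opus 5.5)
You follow the paper's route: mode bookkeeping through $I_1,\dots,I_4$, solving mode by mode, and reading the decay off \eqref{eq:u2omega-1-Inhomogeneous}. The gap is in Step 3. You assert that for $d_{\text{rel}}=|n-2\ell|=m$ the slowest exterior term is $M_{|n-2\ell|}r^{-m}$, but this requires $|n-2\ell|<n$, i.e.\ $n>\ell$. The paper imposes exactly this right after computing $u^{(1)}_\omega$. You never do, and your last paragraph even allows $n-\ell<0$.

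The restriction is genuinely necessary, though not because the second harmonic disappears. For $3\le n<\ell$ (possible once $\ell\ge 4$) one has $d_{\text{rel}}=2\ell-n>n$, and the $\cos(n\theta)$ mode survives:
\begin{itemize}
\item $u^{(1)}_\omega\equiv 0$ in the disk, yet the terms $f\,\partial^2_\nu u^{(0)}_\omega|_- - f'\,\partial_T u^{(0)}_\omega|_-$ in $P^{s,(1)}_\perp$ still contribute $(\ell-1)\chi^s_\perp\varepsilon_\omega^2\alpha^2\cos(n\theta)$, with $\alpha=2E\ell r_0^{\ell-1}/(1+\varepsilon_\omega)$.
\item The $\sin(n\theta)$ parts of $P^{s,(1)}_\parallel$ cancel.
\item Hence $M_n=\frac{16\pi E^2\ell^2 r_0^{2\ell-2}}{(1+\varepsilon_{2\omega})(1+\varepsilon_\omega)^2}(\ell-1)\chi^s_\perp\varepsilon_\omega^2\varepsilon_{2\omega}\,r_0^{n}\neq 0$. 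This is the paper's $M_n$ without its $u^{(1)}_\omega$-term.
\end{itemize}
For example, $\ell=4$, $n=3$ has $d_{\text{rel}}=5$, while $u^{(1)}_{2\omega}\sim r^{-3}$. So the $2^m$ claim can hold only for $n=2\ell+m$, or for $n=2\ell-m$ with $m<\ell$. The dipole case is unaffected, since $2\ell\pm1>\ell$ for $\ell\ge2$.

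Your remark that $u^{(0)}_{2\omega}\sim r^{-2\ell}$ decays faster is likewise true only for $m<2\ell$. For $n>4\ell$ the claim can only concern $u^{(1)}_{2\omega}$ alone. That is how the paper implicitly reads it (cf.\ Theorem \ref{thm:mainthm2}, where $u^{(0)}_{2\omega}\sim r^{-2}$).

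Your Step 4 gives only upper bounds. That meets the literal $O(\cdot)$, but ``exactly as in Theorem \ref{thm:mainthm3}'' is wrong for $n<2\ell$, which includes the dipole case $n=2\ell-1$. The mode-$k$ exterior coefficient is $(\varepsilon_{2\omega}J_3-\frac{r_0}{|k|}J_4)/(1+\varepsilon_{2\omega})$, where $J_3,J_4$ are the $\cos(k\theta)$-coefficients of $I_3,I_4$. The only further factors $(1+\varepsilon_{2\omega})^{-1}$ come from the $f,f'$-terms acting on $u^{(0)}_{2\omega}$. For $k=n-2\ell<0$ they enter through
$\varepsilon_{2\omega}\bigl(\chi^s_\perp\varepsilon_\omega(\varepsilon_{2\omega}-1)+4\chi^s_\parallel\bigr)+2\bigl(\chi^s_\perp\varepsilon_\omega\varepsilon_{2\omega}-\chi^s_\parallel\varepsilon_{2\omega}+\chi^s_\parallel\bigr)=(1+\varepsilon_{2\omega})(\chi^s_\perp\varepsilon_\omega\varepsilon_{2\omega}+2\chi^s_\parallel)$,
so the double pole cancels. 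For $n>2\ell$ the $+2$ becomes $-2$ and it does not cancel. This is why the paper's $n<2\ell$ formula for $M_{|n-2\ell|}$ carries a single factor $(1+\varepsilon_{2\omega})^{-1}$. You therefore need the closed form you plan in order to say which orders are actually attained.

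Finally, your mode count for $\sigma^{s,(1)}$ is right only with $h=h^{(0)}(1+\epsilon\kappa f)$. As printed, \eqref{eq:Lame-expansion} omits the $f$, and with it the curvature term would add a spurious mode $2\ell$.
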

\begin{proof}
Note that the leading-order solution for the linear field has already been given by \eqref{eq:u-omega-leading-term}.
Using \eqref{eq:boundary-term} and \eqref{eq:u-omega-leading-term}, we can obtain  
\begin{align}\label{eq:I1}
  I_1 =& -2E \frac{1-\varepsilon_\omega}{1+\varepsilon_\omega} \ell r_0^{\ell -1} f(\theta) \cos(\ell \theta)\nonumber\\
   =& -E\frac{1-\varepsilon_\omega}{1+\varepsilon_\omega} \ell r_0^{\ell} [\cos((n-\ell)\theta)+\cos((n+\ell)\theta)]
\end{align}
and
\begin{align}\label{eq:I2}
  I_2 =& 2E \frac{1-\varepsilon_\omega}{1+\varepsilon_\omega} \ell r_0^{\ell -2} \left(\ell f(\theta) \cos(\ell \theta) + \frac{\d f(\theta)}{\d \theta}\sin(\ell \theta) \right)\nonumber\\
  =& - E\frac{1-\varepsilon_\omega}{1+\varepsilon_\omega} \ell r_0^{\ell -1} [(n-\ell)\cos((n-\ell)\theta)-(n + \ell)\cos((n +\ell)\theta)].
\end{align}
 
By \eqref{eq:u-omega-first} and \eqref{eq:u-omega-first-density}, we can have first-order solution for linear field  
\begin{align*}
%\label{annulus-p}
 u_\omega^{(1)} =
  \begin{cases}
\ds  E \ell \frac{1-\varepsilon_\omega}{(1+\varepsilon_\omega)^2}  \left(\frac{n-\ell}{|n-\ell|}+1\right)\frac{r^{|n-\ell|}}{r_0^{|n-\ell|-\ell}}\cos((n-\ell)\theta), & r<r_0,
\ms\\
\ds E \ell  \left(\frac{1-\varepsilon_\omega}{(1+\varepsilon_\omega)^2}  \left(\frac{n-\ell}{|n-\ell|}- \varepsilon_\omega\right) \frac{r_0^{|n-\ell|+\ell}}{r^{|n-\ell|}} \cos((n-\ell)\theta)- \frac{1-\varepsilon_\omega}{1+\varepsilon_\omega} \frac{r_0^{n+2\ell}}{r^{n+\ell}} \cos((n+\ell)\theta)\right), & r>r_0.
\end{cases}
 \end{align*} 
It is worth noting that when $n<\ell$, $u_\omega^{(1)} = 0$ for $r<r_0$ and when $n=\ell$, $u_\omega^{(1)} = \text{constant}$ for $r<r_0$. These conditions prevent the occurrence of second harmonics. Therefore, we mainly consider the case $n>\ell$.
Furthermore, for $n>\ell$, it follows that
 \begin{align*}
 %\label{annulus-p}
 u_\omega^{(1)} =
  \begin{cases}
\ds 2 E \ell \frac{1-\varepsilon_\omega}{(1+\varepsilon_\omega)^2} \frac{r^{n-\ell}}{r_0^{n-2\ell}}\cos((n-\ell)\theta), & r<r_0,
\vspace{1em}\\
\ds E \ell \left(\left(\frac{1-\varepsilon_\omega}{1+\varepsilon_\omega}\right)^2   \frac{r_0^{n}}{r^{n-\ell}} \cos((n-\ell)\theta)- \frac{1-\varepsilon_\omega}{1+\varepsilon_\omega} \frac{r_0^{n+2\ell}}{r^{n+\ell}} \cos((n+\ell)\theta)\right) , & r>r_0.
\end{cases}
 \end{align*}
 
We also note that the leading-order solution for the second harmonic field has already been given by \eqref{eq:u2-omega-Inhomogeneous}. Based on this, we proceed to calculate the first-order solution of the second harmonic.
Following a calculation process similar to the proof of Theorem \ref{thm:mainthm2}, we derive
\begin{align*}
I_3 =& \frac{ 8\pi E^2 \ell^2 r_0^{2\ell-2}}{(1+\varepsilon_\omega)^2}\left[ \frac{\varepsilon_{\omega}}{1+\varepsilon_{2\omega}} \ell \left(\chi_{\perp}^s \varepsilon_{\omega} (\varepsilon_{2\omega}-1) + 4\chi^s_{\parallel}\right) - \left(2\frac{1-\varepsilon_\omega}{1+\varepsilon_\omega} (n-\ell) + (n-\ell+1)\right) \chi_{\perp}^s \varepsilon_{\omega}^2 \right] \cos((n-2\ell)\theta)\\
 &-\frac{ 8\pi E^2 \ell^2 r_0^{2\ell-2}}{(1+\varepsilon_\omega)^2}\left[\left(2\frac{1-\varepsilon_\omega}{1+\varepsilon_\omega} (n-\ell) - 2(\ell-1)\right) \chi_{\perp}^s \varepsilon_{\omega}^2  \right] \cos(n\theta)\\
 &+\frac{ 8\pi E^2 \ell^2 r_0^{2\ell-2}}{(1+\varepsilon_\omega)^2}\left[ \frac{\varepsilon_{\omega}}{1+\varepsilon_{2\omega}} \ell  \left(\chi_{\perp}^s \varepsilon_{\omega}(\varepsilon_{2\omega}-1) + 4\chi^s_{\parallel} \right) + (n+\ell-1) \chi_{\perp}^s \varepsilon_{\omega}^2  \right] \cos((n+2\ell)\theta)
\end{align*}
and
\begin{align*}
  I_4 
 =& \frac{ 16\pi E^2 \ell^2 r_0^{2\ell-3}}{(1+\varepsilon_\omega)^2}\left[\frac{\varepsilon_{\omega}}{1+\varepsilon_{2\omega}} \ell \left( \chi_{\perp}^s \varepsilon_{\omega} \varepsilon_{2\omega} - \chi^s_{\parallel}\varepsilon_{2\omega} + \chi^s_{\parallel}\right) - (n-\ell+1)\chi_{\parallel}^s \varepsilon_{\omega} \right] (n-2\ell)\cos((n-2\ell)\theta)\\
 &+\frac{ 16\pi E^2 \ell^2 r_0^{2\ell-3}}{(1+\varepsilon_\omega)^2}\left(2\frac{1-\varepsilon_\omega}{1+\varepsilon_\omega} (n-\ell) \chi_{\parallel}^s \varepsilon_{\omega} \right) n \cos(n\theta)\\
 &-\frac{ 16\pi E^2 \ell^2 r_0^{2\ell-3}}{(1+\varepsilon_\omega)^2}\left[\frac{\varepsilon_{\omega}}{1+\varepsilon_{2\omega}} \ell \left( \chi_{\perp}^s \varepsilon_{\omega} \varepsilon_{2\omega} - \chi^s_{\parallel}\varepsilon_{2\omega} + \chi^s_{\parallel}\right) + (n+\ell-1)\chi_{\parallel}^s\varepsilon_{\omega} \right](n+2\ell)\cos((n+2\ell)\theta).
\end{align*}

Solving equation \eqref{eq:u2-omega-first-density} with $I_3$ and $I_4$, and then using \eqref{eq:u2-omega-first}, we get
\begin{align}\label{eq:u2omega-1-Inhomogeneous}
  u_{2\omega}^{(1)} =& M_{|n-2\ell|} \frac{1}{r^{|n-2\ell|}}\cos((n-2\ell)\theta) + M_n \frac{1}{r^{n}}\cos(n\theta) + M_{n+2\ell}\frac{1}{r^{n+2\ell}}\cos((n+2\ell)\theta),
\end{align}
where
\begin{align*}
   M_{|n-2\ell|} &=  \frac{8\pi E^2 \ell^2 r_0^{2\ell-2}}{(1+\varepsilon_{2\omega})(1+\varepsilon_\omega)^2}\Big(\ell \frac{ \varepsilon_{2\omega}-3}{1+\varepsilon_{2\omega}}\chi_{\perp}^s\varepsilon^2_{\omega}\varepsilon_{2\omega} + \frac{(n-\ell-1)\varepsilon_\omega -(3n -3\ell +1)}{1+\varepsilon_\omega}\chi_{\perp}^s\varepsilon^2_{\omega}\varepsilon_{2\omega} \\
  &\quad + 2\frac{(n-2\ell+1)+(n+2\ell+1)\varepsilon_{2\omega}}{1+\varepsilon_{2\omega}}\chi_{\parallel}^s \varepsilon_{\omega}  \Big)r_0^{n-2\ell}, \quad n> 2\ell, \\
  M_n &= \frac{ 16\pi E^2 \ell^2 r_0^{2\ell-2}}{(1+\varepsilon_{2\omega})(1+\varepsilon_\omega)^2}\left((\ell-1)\chi_{\perp}^s \varepsilon^2_{\omega} \varepsilon_{2\omega} - \frac{1-\varepsilon_{\omega}}{1+\varepsilon_{\omega}}(n-\ell)\left(\chi_{\perp}^s\varepsilon^2_{\omega}\varepsilon_{2\omega}+2 \chi_{\parallel}^s \varepsilon_{\omega}\right)\right)r_0^n,  \\
   M_{n+2\ell} &= \frac{8\pi E^2 \ell^2 r_0^{2\ell-2}}{(1+\varepsilon_{2\omega})(1+\varepsilon_\omega)^2}(n+2\ell-1) \left(\chi_{\perp}^s\varepsilon^2_{\omega}\varepsilon_{2\omega}+2 \chi_{\parallel}^s \varepsilon_{\omega}\right)r_0^{n+2\ell}.
\end{align*}
It follows that for $n < 2\ell$,
\begin{align*}
  M_{|n-2\ell|} &=  \frac{8\pi E^2 \ell^2 r_0^{2\ell-2}}{(1+\varepsilon_{2\omega})(1+\varepsilon_\omega)^2}\Big(\ell \chi_{\perp}^s\varepsilon^2_{\omega}\varepsilon_{2\omega} 
  +  \frac{(n-\ell-1)\varepsilon_\omega  -(3n -3\ell +1)}{1+\varepsilon_\omega}\chi_{\perp}^s\varepsilon^2_{\omega}\varepsilon_{2\omega}\\
  &\quad - 2(n-2\ell+1)\chi_{\parallel}^s \varepsilon_{\omega} \Big)r_0^{2\ell-n}.
\end{align*} 
 
As can be seen from \eqref{eq:u2omega-1-Inhomogeneous}, dipole radiation occurs when $|n-2\ell| = 1$.  The coefficients $M_{|n-2\ell|}$, $M_n$ and $M_{n+2\ell}$ indicate that the resonance behavior is the same as in Theorem \ref{thm:mainthm3}.

This completes the proof.
\end{proof}

\section{Conclusion}\label{sec:conclusion}
In this paper, we have investigated nonlinear harmonic generation in the context of plasmonics and established a comprehensive mathematical framework to elucidate the relationship between symmetry and nonlinear optical responses. Our findings demonstrate that second harmonic generation is most efficient when the overall geometry and background field exhibit broken symmetry and possesses the lowest degree of symmetry. Furthermore, surface plasmon resonance conditions can lead to a substantial enhancement of second harmonic signal intensity.  This understanding is crucial for the rational design of metal nanostructures with optimized geometries and background field to boost specific nonlinear properties. Therefore, plasmon-enhanced nonlinear harmonic generation offers promising avenues for advancing both the control and signal intensity in nonlinear optical applications. Nevertheless, it is important to note that with increasing nanostructure size, retardation effects become non-negligible. Future work should therefore account for retardation effects in second harmonic generation, an important direction that remains to be explored.

\section*{Acknowledgement}
The research of Z. Miao was supported by the Hong Kong Scholars Program grant XJ2024057. The research of H. Liu was supported by the Hong Kong RGC General Research Funds (projects 11311122,  11304224, and 11303125). The research of G. Zheng was supported by the NSF of China (12271151).

\section*{Data availability statement}

This is a piece of theoretical work and no data was involved.

\end{document}